\documentclass[11pt]{article}
\usepackage{pgfplots}
\pgfplotsset{compat=1.15}
\usepackage{mathrsfs}
\usetikzlibrary{arrows}
\usepackage{amscd,amsmath,amsfonts,amssymb,amsthm,cite,mathrsfs,color} 
\usepackage{dsfont} \usepackage{leftidx}
\usepackage{tikz} \usepackage{enumerate}
\usepackage{appendix}

\usepackage{hyperref}

\usepackage[a4paper,left=25mm, right=25mm, top=25mm, bottom=25mm]{geometry}
\usepackage{graphicx} \usepackage{xcolor}
%\usepackage{refcheck}
%\usepackage{slashed}
%\usepackage[hyperfootnotes=true,
%        pdffitwindow=true,
%        plainpages=false,
%        pdfpagelabels=true,
%        pdfpagemode=UseOutlines,
%        pdfpagelayout=SinglePage,
%        hyperindex,]{hyperref}

\numberwithin{equation}{section} % Equations numbered according to sections

% -----------------------------------------

\usepackage{amsthm}

\theoremstyle{plain}
\newtheorem{theorem}{Theorem}[section]
\newtheorem{lemma}[theorem]{Lemma}

\theoremstyle{definition}

\newtheorem{remark}{Remark}[section]

\newtheorem*{acknow}{Acknowledgments}
\begin{document}
	
	\title{\bf{Local Statistics of Singular Values for Products of Truncated Unitary Matrices  }}
	
	\author{
		Yandong Gu\footnotemark[1],   ~Dang-Zheng Liu\footnotemark[1] ~ 
	}
	\renewcommand{\thefootnote}{\fnsymbol{footnote}}
	%\footnotetext[1]{CAS Key Laboratory of Wu Wen-Tsun Mathematics, School of Mathematical Sciences, University of Science and Technology of China, Hefei 230026, P.R.~China. E-mail: dzliu@ustc.edu.cn}
	\footnotetext[1]{School of Mathematical Sciences, University of Science and Technology of China, Hefei 230026, P.R.~China. E-mail: gd27@mail.ustc.edu.cn, dzliu@ustc.edu.cn
		
	}

	\maketitle
	%--------------------------------------------------------------------------------------------------
\begin{abstract}
This paper investigates  local spectral statistics of singular values for many products of independent large rectangular matrices, sampled from the ensemble of truncated unitary matrices with the invariant Haar  measure. Our main contribution establishes a universal three-phase transition in these statistics, demonstrating an    interpolation between  GUE statistics and classical Gaussian behavior.  While such transition was  previously known for products of complex Gaussian matrices\cite{ABK19}\cite{LWW23},  the current work provides the complete characterization in the truncated unitary matrix setting.

%We study the local statistics of singular values for many products of large rectangular matrices, where each matrix is independently drawn from the ensemble of  truncated unitary matrices. We characterize a three-phase transition in these statistics, which interpolates between GUE behavior and Gaussian normality. This transition has been previously established for products of complex Gaussian matrices, and our work extends these results to the truncated unitary setting.
% := \sum_{j=0}^M \frac{1}{n+v_j} - \sum_{j=1}^M \frac{1}{m_j}. $ 
 
%1. Normality: When $ \Delta \to \infty $, the local statistics converge to Gaussian fluctuations. 
 
%2. Criticality: When $ \Delta \to \gamma \in (0, \infty) $, the limiting kernel is characterized by a theta-function integral.
  
%3. GUE statistics: When $ \Delta \to 0 $, the Airy kernel emerges at the soft edge.

% Additionally, we prove bulk and soft-edge universality for a subcritical case where the truncation parameters scale proportionally with $n$. 
\end{abstract}

\section{Introduction and main results}

\subsection{Introduction}
The spectral properties for products of $M$ independent random matrices of size $n\times n$ have attracted significant interest due to their applications in wireless communications \cite{TV04}, statistical physics relating to chaotic dynamical systems\cite{Cpv93}, and free probability theory\cite{MS17}.   
The study on products of random matrices  dates back to    the seminal articles
by Bellman \cite{Bel54} in 1954 and further by Furstenberg and Kesten \cite{FK60} in 1960, in which
classical limit theorems in probability theory were obtained for singular values for the product under certain assumptions on the matrix entries,  
when $M$ goes to infinity  but   $n$ is any fixed integer.  
However, the more recent interest in products of random matrices lies in  the spectral  statistics   of singular values as the matrix size $n$ goes to infinity,
like a single random matrix \cite{AI15}\cite{AIK13}\cite{AKW13}\cite{AGT10}\cite{FL16}\cite{kks16}\cite{ks14}\cite{LWZ16},  when   the product number $M$ is any     fixed integer.
%On the contrary,   that is,  $n\to \infty$  but   $n$ being    fixed,  how to study the  spectral properties for the  product in the large matrix setting?
Luckily,   in the special case for   products of  i.i.d. complex Ginibre matrices, Akemann, Kieburg and Wei  \cite{AKW13}  gave an  explicit  form of  singular value density  and  proved  that they  indeed  form   determinantal point processes.   Furthermore,  Kuijlaars  and Zhang  \cite{KZ14}
established a double  integral representation for  the  correlation kernel.    With the help of this determinantal  structure, for any fixed  $M$ and as $n \to \infty$,  the 
second  named author    with  Wang and Zhang proved the sine and Airy kernels%
 \cite{LWZ16}.  In an opposite direction, for any fixed $n$ and as $M \to \infty$, Akemann, Burda and
Kieburg proved in\cite{ABK14} that $n$ finite-size Lyapunov exponents for the product  are asymptotically
independent Gaussian random variables.

A fundamental question naturally emerges regarding the spectral  behaviors when both  the depth (time size) parameter 
$M$ and  the width   (space size) parameter 
$n$ tend to infinity--a regime central to understanding universal patterns in random matrix theory. In  the complex Ginibre setting,  the second named author,  Wang and  Wang  \cite{LWW23}  rigorously established that the local singular value statistics undergo a phase transition as the   depth-to-width ratio (DWR), defined as the number of  layers (products)
 relative to the layer width (matrix size)
$M/n$
 varies from 0 to $\infty$.  This   crossover phenomenon was also independently discovered by Akemann, Burda, and Kieburg \cite{ABK19} in the physical literature.
 Specifically,  as $M+n\to \infty$  there exists a universal three-phase transition for the correlation kernel.
  \begin{itemize}
 \item  {\bf{High DWR (Deep \& Narrow: $M/n \to \infty$)}}: Normality,   Gaussian fluctuation appears;

\item {\bf{Moderate DWR (Balanced: $M/n  \to \gamma \in (0,\infty)$) }}: Criticality,   critical kernels  ocurrs;  

\item  {\bf{Low DWR (Shallow \& Wide : $M/n \to 0$)}}: GUE statistics,   Airy and sine kernels appear at the soft edge and in the bulk, respectively.
\end{itemize}

A related double-limit phenomenon has been extensively investigated across various random matrix product ensembles, revealing universal transition patterns in mathematical aspects of spectral statistics. Notably: 
 \begin{itemize}
 \item[(i)] For products of i.i.d. non-Hermitian matrices, Liu and Wang \cite{LW24} established a phase transition in local complex eigenvalue statistics between Ginibre  and Gaussian  regimes.

\item[(ii)]  For products of complex Ginibre or truncated unitary matrices, Jiang and Qi \cite{Jq17} established analogous phase transition  for  absolute values of  complex eigenvalues (spectral radius),  from Gumbel to Gaussian.

\item[(iii)]   Gorin and Sun \cite{GS22} investigated global fluctuations of singular values for products of invariant random matrices, proving that the corresponding height functions converge to an explicitly Gaussian field.

\item[(iv)]
Ahn \cite{AA22} established for the $\beta$-Jacobi product process: (1) a limit shape theorem, (2) Gaussian global fluctuations with explicit covariances (generalizing $\beta=1,2,4$ cases). In Moderate DWR regime, $\beta=2$ local fluctuation results imply right-edge convergence to a process interpolating between Airy point process  and deterministic configurations from \cite{ABK19}\cite{LWW23}. Separately, \cite{AP23} showed that Lyapunov exponents for mixed products of Ginibre and truncated Haar matrices yield identical 'picket fence' statistics.

\item[(v)]  For products of real Ginibre matrices \cite{HP21} and matrices with i.i.d. entries \cite{HJ25}, non-asymptotic analyses of singular values and Lyapunov exponents are established by Hanin, Paouris and Jiang. These include an quantitative convergence rate for the empirical measure of squared singular values to the uniform distribution on $[0,1]$.

\item[(vi)] In directed last-passage percolation, Berezin and Strahov \cite{BE25} established a surprising connection to truncated orthogonal/unitary ensembles. Remarkably, the critical kernel \eqref{kernelcriedge} emerges at the soft edge and exhibits a hard-to-soft transition.

\end{itemize}
These universal patterns suggest interesting phase transitions and also   a profound connection between random matrix products  and  other probabilistic models.

 In this paper, we focus on  the products of  truncated unitary matrices and study local statistics of singular values.  Our  product model  is   defined as follows.
 
  \begin{itemize}
 \item   {\bf{Truncated  product model:}}
  $Y_M=T_M\cdots T_1$,  where
 each \( T_j \) is a left-upper \( (n+v_{j}) \times(n+v_{j-1})  \) truncation of Haar-distributed unitary matrices \( U_j \) of size \( m_j \geq n+v_{j-1} \),  with $v_0=0$ and $ v_1,\cdots,v_M$ are non-negative integers. 
 
  \end{itemize}

 %The squared singular values of \( Y_M = T_M \cdots T_1 \) form a determinantal point process if \( n \leq \sum_{j=1}^M (m_j - n - v_j) \), with a correlation kernel expressible as a double contour integral involving ratios of gamma functions (Theorem 2.19 \cite{CKW15}).

%Double contour integral formulas arevery suitable to derive asymptotic results on local statistics, by the saddle point method.% developed by\cite{LWW23}\cite{LWZ16}\cite{LW24}.
 %Precise conditions  on truncated unitary matrices so that 
 Kieburg, Kuijlaars, and Stivigny \cite{kks16} established that for products of rectangular truncated unitary matrices, the squared singular values follow a determinantal point process. The associated correlation kernel admits an explicit representation as a double contour integral.
Moreover,  \cite{CKW15} characterized when the eigenvalues of the Hermitian matrices $Y^*_MY_M$ form a determinantal point process, if and only if the dimensional constraint
\begin{equation}\label{dcon}
n \leq \sum_{j=1}^{M}(m_j - n - v_j)
\end{equation}
is satisfied. In such cases, the correlation kernel of the log-transformed matrices $\log(Y^*_MY_M)$ takes the form
\begin{equation}\label{logkn}
K_n(x,y) = \int_{\mathcal{C}}\frac{ds}{2\pi i}\oint_{\Sigma}\frac{dt}{2\pi i}\frac{e^{xt-ys}}{s-t}\prod_{j=0}^M\frac{\Gamma(s+n+v_j)\Gamma(t+m_j)}{\Gamma(t+n+v_j)\Gamma(s+m_j)},
\end{equation}
with $m_0 = 0$ by convention. Here
 $\mathcal{C}$ is a positively oriented Hankel contour in the left half-plane, encircling $(-\infty,-1]$ while starting and ending at $-\infty$, and 
    $\Sigma$ is a closed contour enclosing $[0,n-1]$ chosen disjoint from $\mathcal{C}$.
    
 Under  a  special assumption that   all ratios $m_j/n \to 2$,   the free multiplication convolution from free probability theory guarantees that  as $n \to \infty$ the empirical spectral distribution of $Y^*_{M}Y_{M}$ converges weakly to a limiting measure $\mu_M$. The density of $\mu_M$ and its edge behavior at the endpoints of the support interval
$
 \big[0, (M+1)^{M+1}2^{-(M+1)}M^{-M}\big]
 $ are   explicitly  characterized in \cite{NT20}. 
 This result parallels the well-established parameterization via free probability \cite{HM13} for the spectral distribution of squared singular values of products of Ginibre matrices, with complementary insights from characteristic polynomial approaches developed in \cite{Ne14}. These global spectral descriptions play a central role in etablishing  bulk statistics for the product models\cite{LWW23}\cite{LWZ16}.

 \vspace{0.5em}
 \noindent \textbf{Notations:} Throughout this work,
 \begin{itemize}
 	\item $\psi(z) = \Gamma'(z)/\Gamma(z)$ denotes the digamma function;
 	\item $[x]$ represents the floor function (greatest integer less than or equal to $x$);
 	\item $\Re z$ and $\Im z$ denote the real and imaginary parts of $z$, respectively.
 \end{itemize}

\subsection{Main results}

The main goal of this paper   is to  investigate the local statistics of singular values for the   truncated  product model and to  identify   a phase transition in these local statistics
as the   depth-to-width ratio (DWR), defined as the modified  ratio of the number of   products 
 relative to the  matrix size,
\begin{equation}
    \Delta_{M,n} = \sum_{j=0}^M \frac{1}{n+v_j} - \sum_{j=1}^M \frac{1}{m_j}
\end{equation}
 varies from 0 to $\infty$. %This transition provides a natural counterpart to the known behavior of singular values for products of Ginibre matrices \cite{ABK19,LWW23}.

%The main goal of the article
%is to study the local  singular values statistics for  products of truncated unitary matrices.
%We observe a phase transition of the local stattistics when
%the ralative parameter $\Delta_{M,n}$, defined as
%\begin{equation}
%	\Delta_{M,n} = \sum_{j=0}^M \frac{1}{n+v_j} - \sum_{j=1}^M \frac{1}{m_j},
%\end{equation} goes from $0$ to $\infty$, which can be treated as an analogue of singular values for products of Ginibre matrices \cite{ABK19}\cite{LWW23}.
%
%\noindent{\bf Notations:} 
% $\psi(z) = \Gamma'(z)/\Gamma(z) $ denotes the digamma function; let $[x]$ be the greatest interger less than or equal to $x$. $\Re z$ denotes the real part of $z$ and $\Im z$ denotes the imaginary part of $z$.
%case 1. Normality: For \( \Delta \to \infty \), the rescaled fluctuations converge to Gaussian limits (Theorem \ref{supthm});

%2. Criticality: For \( \Delta \to \gamma \in (0, \infty) \), critical kernels involving theta functions arise (Theorem \ref{critthm});  

%3. GUE statistics: For \( \Delta \to 0 \), classical Airy and sine kernels appear at the soft edge and bulk, respectively (Theorems \ref{subthm}, \ref{TBU}, %\ref{TSEU}).  \\
%We always assume that both $M$ and $n$ go to infinity, and $M$ may depend on $n$. Below in the following  cases, we use the scaling function $g(\cdot)$ or $g(k;\cdot)$ to facilitate our statement of the results.

\begin{theorem} {\bf({Normality})}\label{supthm}
	Suppose that $\lim\limits_{M+n \to \infty} \Delta_{M,n}=\infty$. Given any $k \in \{1,2,\cdots,n\}$, let \begin{equation}\rho_{M,n}(k)=\Big(\sum_{j=0}^M\psi'(n+v_j+1-k)-\sum_{j=1}^M\psi'(m_j+1-k)\Big)^{\frac{1}{2}},
	\end{equation} and
	\begin{equation}
		g(k;\xi)=\sum_{j=0}^M\psi(n+v_j+1-k)-\sum_{j=1}^M \psi(m_j+1-k)+\xi\rho_{M,n}(k),
	\end{equation}
then uniformly for $\xi,\eta$ in a compact subset of $\mathbb{R}$, we have 
	\begin{equation}
		\lim_{M+n \to \infty}\rho_{M,n}(k) e^{(k-1)(\xi-\eta)\rho_{M,n}(k)}K_n\big(g(k;\xi),g(k;\eta)\big)=\frac{1}{\sqrt{2\pi}}e^{-\frac{\eta^2}{2}}.
	\end{equation}
\end{theorem}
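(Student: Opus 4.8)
The starting point is the double contour integral representation \eqref{logkn} for the kernel $K_n(x,y)$. The plan is to perform a saddle point analysis after substituting $x = g(k;\xi)$ and $y = g(k;\eta)$ and rescaling. Write the integrand as $\frac{1}{s-t}\exp\big(xt - ys + F(t) - F(s)\big)$, where $F(z) = \sum_{j=0}^M\log\Gamma(z+n+v_j) - \sum_{j=1}^M\log\Gamma(z+m_j)$ collects the Gamma factors (with $m_0=0$). The crucial observation is that the $t$-contour $\Sigma$ encircles $[0,n-1]$, and near the point $t = -(n-k)$ the factor $\prod_j \Gamma(t+n+v_j)$ in the denominator has a pole (coming from $j=0$); more precisely, one expects the dominant contribution for the given scaling to come from $t$ near $1-(n+1-k) = k-n$ and $s$ near the same region, shifted. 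Concretely I would set $t = k-n + \tau$ and $s = k-n + \sigma$ (or a suitable variant) so that $\rho_{M,n}(k)$ emerges as the natural scale; then $g(k;\xi)$ is precisely $g(k;0) + \xi\rho_{M,n}(k)$, with $g(k;0) = F'(\text{saddle})$-type quantity equal to $\sum_j \psi(n+v_j+1-k) - \sum_j \psi(m_j+1-k)$, i.e. the location where the linear term $xt$ balances $F'(t)$.

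The key steps, in order: (1) Identify the relevant critical point of $\phi(z) := g(k;0)\,z + F(z)$; by construction $\phi'$ vanishes (in the appropriate discrete/asymptotic sense) at the shifted point, and $\phi''$ there equals $\sum_{j=0}^M\psi'(n+v_j+1-k) - \sum_{j=1}^M\psi'(m_j+1-k) = \rho_{M,n}(k)^2$. (2) Since $\Delta_{M,n}\to\infty$ forces $\rho_{M,n}(k)^2\to\infty$ (one must check this, using $\psi'(z)\sim 1/z$ so that $\rho_{M,n}(k)^2$ is comparable to $\Delta_{M,n}$ up to the shift by $k$), the fluctuations are Gaussian on scale $1/\rho_{M,n}(k)$: expand $\phi(z) = \phi(z_*) + \tfrac12\rho_{M,n}(k)^2(z-z_*)^2 + \text{(cubic and higher)}$ and rescale $z - z_* = w/\rho_{M,n}(k)$. (3) The linear-in-$\xi$ shift in $x$ translates into the Gaussian measure: after rescaling, the $s$-integral produces $\int e^{-\eta w + w^2/2}\,dw$ along a vertical line, giving $e^{\eta^2/2}$, while the $t$-integral over the (deformed) Hankel-type contour, together with the $\frac{1}{s-t}$ factor, produces the complementary Gaussian, and the prefactor $\rho_{M,n}(k)e^{(k-1)(\xi-\eta)\rho_{M,n}(k)}$ is exactly what is needed to cancel the leading non-oscillatory exponential growth (the $(k-1)$ factor accounts for the cumulative shift of the $k$-th largest singular value). (4) Assemble: the double integral factorizes in the limit into a product of one-dimensional Gaussian integrals whose value is $\tfrac{1}{\sqrt{2\pi}}e^{-\eta^2/2}$.

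The main obstacle is controlling the error terms uniformly. Two difficulties stand out. First, the sums defining $F$, $g$, and $\rho_{M,n}$ run over $M$ terms with $M$ possibly growing, so one cannot simply Taylor-expand each $\log\Gamma$ factor; instead one needs uniform estimates on $\psi$, $\psi'$, $\psi''$ (and the remainder in the expansion of $\log\Gamma(z+a)$ about $z=z_*$) that are summable against the weights, and one must verify that the cubic term $\sum_j \psi''(\cdot)$ is $o(\rho_{M,n}(k)^3)$ so the Gaussian approximation is legitimate — this is where the precise hypothesis $\Delta_{M,n}\to\infty$ does its work, likely via a convexity or monotonicity argument showing the third-order sum is dominated by $\rho_{M,n}(k)^2$ times a vanishing factor. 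Second, the contour $\Sigma$ encircles the whole interval $[0,n-1]$ and $\mathcal{C}$ runs to $-\infty$; one must justify deforming both contours to pass through the neighbourhood of $z_*$ while showing the discarded portions contribute negligibly, which requires tail bounds on $\Re\,\phi$ along the Hankel contour and the absence of competing saddles — the ratio structure $\Gamma(s+n+v_j)/\Gamma(s+m_j)$ with $m_j \ge n+v_{j-1}$ should guarantee the needed decay, but making this uniform in $k$ and in the configuration $(v_j, m_j)$ is the technical heart of the argument.
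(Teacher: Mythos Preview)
Your overall strategy --- steepest descent on the double contour integral, with the relevant point determined by the condition $F'=g(k;0)$ and curvature $\rho_{M,n}(k)^2$ --- matches the paper, and your list of technical obstacles is apt. But there is a genuine gap in the mechanism for the $t$-integral. First, the localization point is $t=1-k$, not $k-n$; your own derivative computation actually confirms this, since $F'(1-k)=\sum_{j=0}^M\psi(n+v_j+1-k)-\sum_{j\ge1}\psi(m_j+1-k)=g(k;0)$. Second, and more importantly, your integrand drops the factor $\Gamma(t)/\Gamma(s)$ coming from the $j=0$ term with $m_0=0$: it is not absorbed into your $F$, and it is not a harmless regular factor.

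That factor is precisely what drives the $t$-integral. The function $\Gamma(t)$ has a simple pole at $t=1-k$, and the paper takes the dominant piece of the $t$-contour to be a small circle $\Sigma_0(1-k)$ of radius $\rho_{M,n}(k)^{-1}$ around this pole (the remainder of $\Sigma$, enclosing the other poles of $\Gamma(t)$, is shown negligible by a separate monotonicity lemma for $\Re F(t;k)$). After the change of variables $s=1-k+\sigma/\rho_{M,n}(k)$, $t=1-k+\tau/\rho_{M,n}(k)$ one has $\Gamma(t)/\Gamma(s)\sim\sigma/\tau$, and the limiting double integral is
\[
\int_{2-i\infty}^{2+i\infty}\frac{d\sigma}{2\pi i}\oint_{|\tau|=1}\frac{d\tau}{2\pi i}\;\frac{\sigma}{\tau(\sigma-\tau)}\;\frac{e^{-\tau^2/2+\xi\tau}}{e^{-\sigma^2/2+\eta\sigma}}.
\]
The $\tau$-integral is a \emph{residue} at $\tau=0$ (value $1$, independent of $\xi$), not a Gaussian saddle; only the $\sigma$-integral is Gaussian and evaluates to $\tfrac{1}{\sqrt{2\pi}}e^{-\eta^2/2}$. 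This residue structure is why the limit depends on $\eta$ but not on $\xi$ --- an asymmetry your ``product of one-dimensional Gaussian integrals'' would not produce.
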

In order to state the critical results, we need to intoduce two kernels introduced in \cite{ABK19} \cite{LWW23}, which correspond to the bulk and edge limits respectively. For $\gamma \in (0,\infty)$, we define 
\begin{equation}\label{kercribulk}
	K_{\mathtt{crit}}^{\mathtt{(bulk)}}(x,y;\gamma)=\frac{1}{\sqrt{8\pi \gamma}}\int_{-1}^{1}e^{\frac{(\pi w-iy)^2}{2\gamma}}\theta\big(\frac{\pi w-ix}{2\pi},\frac{i\gamma}{2\pi}\big)dw,
\end{equation}
 and
\begin{equation}\label{kernelcriedge}
	K_{\mathtt{crit}}^{\mathtt{(edge)}}(x,y;\gamma)=\int_{1-i\infty}^{1+i\infty}\frac{ds}{2\pi i}\oint_{\Sigma_{-\infty}}\frac{dt}{2\pi i}
	\frac{1}{s-t}\frac{\Gamma(t)}{\Gamma(s)}\frac{e^{\frac{\gamma s^2}{2}-ys}}{e^{\frac{\gamma t^2}{2}-xt}},
\end{equation}
where the anticlockwise contour $\Sigma_{-\infty} \subset \{z\in \mathbb{C} \mid \Re z <1\}$, starts from $-\infty-i\epsilon$, encircles $\{0,-1,-2,\dots\}$, and then goes to $-\infty+i\epsilon$ for some $\epsilon>0$. 
The Jacobi theta function  is given by 
\begin{equation}
	\theta(z,\tau)=\sum_{n=-\infty}^{\infty}e^{\pi i n^2\tau+2\pi inz},\quad \Im \tau >0, z\in \mathbb{C}.
\end{equation}

\begin{theorem}{\bf(Criticality)}\label{critthm}
Suppose that $\lim\limits_{M+n \to \infty} \Delta_{M,n}=\gamma \in (0,\infty)$.\\
$\mathtt{(i)}$ {\bf(Critical bulk limit)} For any given $u\in(0,1)$, let \begin{equation}\label{gamma'}
\gamma'=\lim\limits_{M+n \to \infty}\sum_{j=0}^M\frac{1}{n+v_j-nu}-\sum_{j=1}^M\frac{1}{m_j-nu},
\end{equation} and 
\begin{equation}\label{cribgf}
	\begin{split}g(\xi)=&\sum_{j=1}^M\log\Big(\frac{n+v_j-nu}{m_j-nu}\Big)+\log\big(\frac{1-u}{u}\big)\\&+\Big(\sum_{j=0}^M\frac{1}{n-nu+v_j}-\sum_{j=1}^M\frac{1}{m_j-nu}\Big)\big(nu-[nu]-\frac{1}{2}\big)+\xi,\end{split}
\end{equation}
then uniformly for $\xi,\eta$ in a compact subset of $\mathbb{R}$, we have 
\begin{equation}
	\lim_{M+n \to \infty}e^{(g(\xi)-g(\eta)[nu]}K_n\big(g(\xi),g(\eta)\big)=	K_{\mathtt{crit}}^{\mathtt{(bulk)}}(\xi,\eta;\gamma').
\end{equation}
$\mathtt{(ii)}$ {\bf(Critical  edge limit)}   For $ u=0$, 
let \begin{equation}
	g(\xi)=\sum_{j=0}^M\log(n+v_j)-\sum_{j=1}^M\log(m_j)-\frac{1}{2}\big(\sum_{j=0}^M\frac{1}{n+v_j}-\sum_{j=1}^M\frac{1}{m_j}\big)+\xi,
\end{equation}
then uniformly for $\xi,\eta$ in a compact subset of $\mathbb{R}$, we have 
\begin{equation}
	\lim_{ n\to \infty}K_n\big(g(\xi),g(\eta)\big)=	K_{\mathtt{crit}}^{\mathtt{(edge)}}(\xi,\eta;\gamma).
	\end{equation}

\end{theorem}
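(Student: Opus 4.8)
The plan is to derive both limits from the explicit double contour integral \eqref{logkn}, by a steepest--descent and residue analysis in the spirit of the complex Ginibre case \cite{LWW23}, \cite{ABK19}. Write the integrand of \eqref{logkn} as $\tfrac{1}{s-t}\,e^{g(\xi)t-g(\eta)s+\Phi(s)-\Phi(t)}$ with $\Phi(z)=\sum_{j=0}^{M}\bigl(\log\Gamma(z+n+v_j)-\log\Gamma(z+m_j)\bigr)$, $m_0=0$, and separate the singular $j=0$ Gamma factors by setting $\tilde H(z)=\Phi(z)+\log\Gamma(z)-g(0)z=\sum_{j=0}^{M}\log\Gamma(z+n+v_j)-\sum_{j=1}^{M}\log\Gamma(z+m_j)-g(0)z$, which is holomorphic near $z=0$. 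The functions $g$ and the conjugating factor $e^{(g(\xi)-g(\eta))[nu]}$ are designed so that, after substituting $x=g(\xi)$, $y=g(\eta)$, the large linear part of the exponent cancels and the remaining exponent, once $s$ and $t$ are rescaled around the relevant centre (the origin at the edge; the cluster of $t$--poles near $-[nu]$ in the bulk), is asymptotically the Gaussian $\tfrac12\gamma z^{2}$, resp.\ $\tfrac12\gamma'z^{2}$, plus lower order. Making this quantitative is a Taylor expansion of $\tilde H$ at the centre: using $\psi(x)=\log x-\tfrac1{2x}+O(x^{-2})$ and $\psi^{(p)}(x)=O(x^{-p})$ one verifies that the linear coefficient tends to $0$ (this is the role of the $-\tfrac12\Delta_{M,n}$ in $g$, and in the bulk of the term involving $nu-[nu]-\tfrac12$ in \eqref{cribgf}), that the quadratic coefficient $\tfrac12\bigl(\sum_{j=0}^{M}\psi'(\cdot)-\sum_{j=1}^{M}\psi'(\cdot)\bigr)$ tends to $\tfrac12\gamma$ (resp.\ $\tfrac12\gamma'$), and that every higher coefficient, which to leading order is $\sum_{j=0}^{M}(n+v_j)^{-p}-\sum_{j=1}^{M}m_j^{-p}$ for $p\ge3$, tends to $0$.

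\textbf{Edge case} ($u=0$). Here the $j=0$ factors reproduce the prefactor $\Gamma(t)/\Gamma(s)$ of \eqref{kernelcriedge} verbatim, and the surviving exponent equals $\xi t-\eta s+\tilde H(s)-\tilde H(t)$, which by the expansion above tends, uniformly for $s,t$ in compacts, to $\xi t-\eta s+\tfrac\gamma2(s^{2}-t^{2})$; hence the integrand converges pointwise to that of $K_{\mathtt{crit}}^{\mathtt{(edge)}}(\xi,\eta;\gamma)$. It then remains to deform the Hankel contour $\mathcal C$ to a vertical line $\Re s=1$ and the contour $\Sigma$, which encircles the $n$ poles $0,-1,\dots,-(n-1)$ of $\Gamma(t)/\Gamma(t+n)$, to $\Sigma_{-\infty}$ --- checking that disjointness is preserved throughout and that the enclosed pole set exhausts $\{0,-1,-2,\dots\}$ as $n\to\infty$ --- and to exhibit an $n$--uniform integrable majorant on these contours, after which dominated convergence concludes. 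For the majorant, on $\Re s=1$ the Gaussian factor $e^{\Re(\gamma s^{2})/2}=e^{\gamma(1-(\Im s)^{2})/2}$ dominates the growth of $1/\Gamma(s)$, while the super--exponential decay of $\Gamma(t)$ along the left tail of $\Sigma_{-\infty}$ controls the $t$--integral.

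\textbf{Bulk case} ($u\in(0,1)$). Now the $n$ poles of the $t$--integrand cluster near $t=-[nu]$. One evaluates $\oint_\Sigma dt$ as the finite residue sum $\sum_{k=0}^{n-1}\operatorname{Res}_{t=-k}$, sets $k=[nu]+\ell$, and Taylor--expands in $\ell$ the Gamma ratio occurring in the residue; once the linear--in--$\ell$ term has been cancelled by the choice of $g$, the residue at index $[nu]+\ell$ factorizes, to leading order, as a smooth prefactor times the Gaussian $e^{-\tfrac12\gamma'\ell^{2}}$ and an $\ell$--linear phase tied to the $s$--variable, so that only $|\ell|$ of bounded order contributes after rescaling. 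Carrying out the remaining $s$--integral over $\mathcal C$ by steepest descent through a complex--conjugate pair of saddle points, and deforming the descent contour to the curve parametrized by $w\in[-1,1]$, produces the outer integral $\int_{-1}^{1}dw$ together with the factor $(8\pi\gamma')^{-1/2}e^{(\pi w-i\eta)^{2}/(2\gamma')}$ of \eqref{kercribulk}; simultaneously the sum over $\ell\in\mathbb Z$ of the Gaussian weights, combined with the $1/(s-t)$ factor of the kernel, assembles into the Jacobi theta function $\theta\bigl(\cdot,\tfrac{i\gamma'}{2\pi}\bigr)$ appearing there. The residual factor $e^{-[nu](g(\xi)-g(\eta))}$ is exactly absorbed by the conjugating factor in the statement, and justifying the interchange of sum, integral and limit by the same kind of $n$--uniform bounds as at the edge yields $K_{\mathtt{crit}}^{\mathtt{(bulk)}}(\xi,\eta;\gamma')$.

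The principal obstacle is the uniform asymptotic control when $M$ and $n$ \emph{both} tend to infinity with the parameters $\{n+v_j\}$ and $\{m_j\}$ possibly of widely different sizes, so that $\Phi$ is a sum of a growing number of slowly varying terms. One must show that $\sum_{j=0}^{M}(n+v_j)^{-p}-\sum_{j=1}^{M}m_j^{-p}\to0$ for every $p\ge2$: each summand $(n+v_j)^{-p}-m_j^{-p}$ is nonnegative (as $m_j\ge n+v_j$) and at most $p\,n^{-(p-1)}\bigl((n+v_j)^{-1}-m_j^{-1}\bigr)$, so the whole expression lies between $0$ and $n^{-p}+p\,n^{-(p-1)}(\Delta_{M,n}-n^{-1})$, which is $o(1)$ since $n\to\infty$ and $\Delta_{M,n}$ is bounded. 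The more delicate point is to upgrade these pointwise estimates into $n$--uniform dominating functions valid along the whole of the non--compact deformed contours, not merely on compacts. In the bulk case there is the additional difficulty of running the residue--to--theta asymptotics and the steepest--descent $s$--integral on the same footing, and of verifying that the descent contour can be kept disjoint from the lattice of $t$--poles while being deformable to the curve giving the $[-1,1]$ parametrization.
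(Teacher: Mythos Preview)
Your edge argument is essentially the paper's: keep the factor $\Gamma(t)/\Gamma(s)$, Taylor--expand the remaining exponent to get $\tfrac{\gamma}{2}z^{2}$, deform $\mathcal C$ to a vertical line and $\Sigma$ to $\Sigma_{-\infty}$, and dominate. The paper in fact omits the edge case as ``similar'', so there is no disagreement here.

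The bulk argument, however, has a genuine gap. You propose to sum the $t$--residues first and then do the $s$--integral ``by steepest descent through a complex--conjugate pair of saddle points'', with the $\int_{-1}^{1}dw$ arising by ``deforming the descent contour''. In the critical regime there is \emph{no} pair of complex saddles: after centering at $-[nu]$ the $s$--exponent is, to leading order, the single--well quadratic $\tfrac{\gamma'}{2}s^{2}-\eta s$, with one real saddle. You appear to be importing the picture from the low--DWR bulk (the paper's Theorem~\ref{TBU}~$\mathtt{(ii)}$), where the conjugate saddles $W_{\pm}$ do occur, but that mechanism produces the \emph{sine} kernel, not the theta/$[-1,1]$ structure of $K_{\mathtt{crit}}^{\mathtt{(bulk)}}$. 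Nothing in your outline explains where the real segment $[-1,1]$ could come from.

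The paper's route is different and supplies exactly the missing device. One first applies Euler's reflection formula $\Gamma(t)\Gamma(1-t)/\bigl(\Gamma(s)\Gamma(1-s)\bigr)=\sin(\pi s)/\sin(\pi t)$ together with the elementary identity $\sin(\pi s)/\sin(\pi t)=e^{i\pi t}\sin\pi(s-t)/\sin(\pi t)+e^{i\pi(t-s)}$ to rewrite the kernel so that the $t$--poles are the zeros of $\sin(\pi t)$ and the factor $\tfrac{1}{s-t}$ is replaced by $\tfrac{\sin\pi(s-t)}{s-t}\cdot\tfrac{e^{i\pi t}}{\sin\pi t}$. After shifting by $[nu]$ and localising both variables to a window of size $n^{1/4}$ (the global control being a direct $\arctan$--type estimate for $\Re f(\eta;iy)$), one inserts the exact representation
\[
\frac{\sin\pi(s-t)}{s-t}=\frac{\pi}{2}\int_{-1}^{1}e^{i\pi(t-s)w}\,dw,
\]
which is where the $[-1,1]$ integral actually comes from. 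The $s$--integral over $i\mathbb R$ is then a genuine Gaussian and yields $(2\pi\gamma')^{-1/2}e^{(\pi w-i\eta)^{2}/(2\gamma')}$, while the $t$--integral over the two horizontal lines $\Im t=\pm\tfrac12$ is evaluated by residues at the integers (the poles of $1/\sin\pi t$), producing $\theta\bigl((\pi w-i\xi)/2\pi,\,i\gamma'/2\pi\bigr)$. Your residue--first strategy can in principle be pushed through, but not via ``a pair of saddles''; you would still need the sinc representation (or an equivalent Poisson--summation step) to generate the $w$--integral.

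A minor side remark: your bound on the higher coefficients assumes $m_j\ge n+v_j$, but the model only guarantees $m_j\ge n+v_{j-1}$; the conclusion $\sum_{j}(n+v_j)^{-p}-\sum_j m_j^{-p}\to0$ is still true (it follows from $\Delta_{M,n}$ bounded and $n\to\infty$), but the termwise nonnegativity you invoke need not hold.
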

For the third regime $\lim\limits_{M+n \to \infty}\ \Delta_{M,n}=0$, we need to introduce the scaling parameter 
\begin{equation}
\rho_{M,n}=\Big(\frac{1}{2}\sum_{j=0}^M\frac{(m_j+z_0)^2-(n+v_j+z_0)^2}{(m_j+z_0)^2(n+v_j+z_0)^2}\Big)^{-\frac{1}{3}},
\end{equation}
and a spectral parameter
\begin{equation}
	\lambda_M=\prod_{j=0}^M\frac{n+v_j+z_0}{m_j+z_0},
\end{equation}
where $z_0$ is the unique positive solution of the rational equation
\begin{equation}\label{defz_0}
	\sum_{j=0}^M\Big(\frac{1}{n+v_j+z}-\ \frac{1}{m_j+z}\Big)=0.
\end{equation}
We also give a definition of the celebrated Airy kernel by
\begin{equation}
	\begin{split}
		K_{\mathtt{Ai}}(x,y)%:&=\frac{\mathtt{Ai}(x)\mathtt{Ai}'(y)-\mathtt{Ai}'(x)\mathtt{Ai}(y)}{x-y}\\
		&=\frac{1}{(2 \pi i)^2}\int_{\gamma_R}du\int_{\gamma_L}d\lambda \frac{e^{u^3/3-xu}}{e^{{\lambda}^3/3-y\lambda}}\frac{1}{u-\lambda},
	\end{split}
\end{equation}
where $\gamma_R$ and $\gamma_L$ are symmetric with respect to the imaginary axis, and $\gamma_R$ is a contour in the right-half plane going from $e^{-\frac{\pi i}{3}}\cdot \infty$ to $e^{\frac{\pi i}{3}}\cdot \infty$, see e.g.\cite{AGZ10}.
\begin{theorem}{\bf(GUE  statistics)}	Suppose that $\lim\limits_{M+n \to \infty} \Delta_{M,n}=0$.\\
$\mathtt{(i)}$	{\bf(GUE edge statistics)}\label{subthm}
 Let 
	\begin{equation}\label{subcgd}
		g(\xi)=\log \lambda_M+\frac{\xi}{\rho_{M,n}},
	\end{equation}
then uniformly for $\xi,\eta$ in a compact subset of $\mathbb{R}$, we have 
\begin{equation}
	\lim_{M+n \to \infty}
\frac{1}{\rho_{M,n}}e^{-(\xi-\eta)\frac{z_0}{\rho_{M,n}}}	K_n\big(g(\xi),g(\eta)\big)=K_{\mathtt{Ai}}(\xi,\eta).
\end{equation}
%Next, we examine a simplified variant of the above model. Using the parameterization, we derive an exact expression for the limiting density of the squared singular values of $Y_M$. This expression allows us to prove that the bulk limit is given by the sine kernel.
%{\bf{Assumption:}} 
$\mathtt{(ii)}${\bf(GUE bulk statistics)}\label{TBU}For $0<a<M$, we assume that 
\begin{equation}\label{assumptionss}
	\frac{m_j}{n} \to 1+\frac{1}{a},\quad \frac{v_j}{n}\to 0,\quad \text{for $j=1,2,\dots,M$}.
\end{equation} %Under the {\bf{Assumption}} \eqref{assumptionss}, 
%and  $\lim\limits_{M+n \to \infty} \Delta_{M,n} =0$. 
For $\theta \in \big(0,\frac{\pi}{M+1}\big)$ , let 
\begin{equation}
	g(\xi)=\log\Big(\frac{a^M}{(a+1)^{M+1}}\Big)+V_M(\theta)+\frac{\xi}{\rho_{M,n}(\theta)},
\end{equation}
where\begin{equation}
V_M(\theta)=(M+1)\log\sin((M+1)\theta)-M\log\sin (M\theta)-\log\sin (\theta),\end{equation}and 
\begin{equation}
	\rho_{M,n}(\theta)=\frac{n\sin( (M+1)\theta)\sin(M\theta)\sin(\theta)}{\pi\Big((a+1)^2\sin^2(M\theta)\sin^2(\theta)+(a\cos(M\theta)\sin(\theta)-\sin(M\theta)\cos(\theta))^2\Big)},
\end{equation}
then uniformly for $\xi$ and $\eta$ in any compact subset of $\mathbb{R}$, we have 
	\begin{equation}
		\lim_{n \to \infty}e^{-\pi(\xi-\eta)\big(\cot(\theta)-\frac{a\sin((M+1)\theta)}{(a+1)\sin(M\theta)\sin(\theta)}\big)}\frac{1}{n\rho(\theta)}K_n\big(g(\xi),g(\eta)\big)=\frac{\sin(\pi(\xi-\eta))}{\pi (\xi-\eta)}.
	\end{equation}
 
\end{theorem}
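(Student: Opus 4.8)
The plan is to analyse the double contour integral \eqref{logkn} by the method of steepest descent, collecting the $M+1$ Gamma ratios into a single scalar ``potential''
\begin{equation*}
G_n(z)=\sum_{j=0}^M\bigl(\log\Gamma(z+n+v_j)-\log\Gamma(z+m_j)\bigr),\qquad m_0=0,
\end{equation*}
so that $K_n(x,y)=\frac{1}{(2\pi i)^2}\int_{\mathcal C}\oint_{\Sigma}\frac{1}{s-t}\exp\bigl\{(G_n(s)-ys)-(G_n(t)-xt)\bigr\}\,ds\,dt$. After substituting $x=g(\xi)$, $y=g(\eta)$, the leading (parameter‑independent) term of $g$ plays the role of the value $x_0$ for which $z\mapsto G_n(z)-x_0z$ possesses the relevant critical point(s): a degenerate (coalescing) critical point at $z=z_0$ in the edge regime $\mathtt{(i)}$, and a complex‑conjugate pair of simple critical points in the bulk regime $\mathtt{(ii)}$. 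The sub‑leading term of $g$ is the local coordinate, the exponential prefactors in front of $K_n$ are the standard conjugation that removes the contribution of the critical‑point location to the part of the integrand linear in $\xi,\eta$, and the remaining scalar prefactor ($\rho_{M,n}^{-1}$, resp.\ $(n\rho(\theta))^{-1}$) is the Jacobian produced by rescaling $s,t$ about the critical point together with the factor $1/(s-t)$. (The same steepest‑descent framework underlies Theorems~\ref{supthm} and~\ref{critthm} as well, the value of $\lim\Delta_{M,n}$ selecting the regime.)

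For part $\mathtt{(i)}$ I would first observe that $\lim\Delta_{M,n}=0$ forces $z_0$ to be large (comparable to $n$), since $z_0$ is characterised by $\sum_j\bigl(\tfrac1{n+v_j+z}-\tfrac1{m_j+z}\bigr)=0$, a quantity of the same order as $\Delta_{M,n}$; consequently every argument of every Gamma factor at $z_0$ diverges and uniform Stirling/polygamma expansions apply. These give $G_n'(z_0)=\log\lambda_M+o(\rho_{M,n}^{-1})$, $G_n''(z_0)=o(\rho_{M,n}^{-2})$ -- which is precisely equation \eqref{defz_0} read to leading order via $\psi'(w)\sim 1/w$ -- and $G_n'''(z_0)$ of the order $\rho_{M,n}^{-3}$ (via $\psi''(w)\sim-1/w^2$), which is exactly what pins down $\rho_{M,n}$ as the edge scale. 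Rescaling $s,t$ about $z_0$ at the scale $\rho_{M,n}$ then turns $(G_n(s)-ys)-(G_n(t)-xt)$ into the cubic‑plus‑linear exponents of $K_{\mathtt{Ai}}(\xi,\eta)$ up to an error that is $o(1)$ uniformly on compacts, the quartic remainder being suppressed. It remains to deform $\mathcal C$ and $\Sigma$ (keeping them disjoint and keeping the required poles of the Gamma factors enclosed) to steepest‑descent contours through $z_0$ in the two valley directions of the cubic, matching the contours $\gamma_R,\gamma_L$ in the definition of $K_{\mathtt{Ai}}$, and to bound the contributions of the remaining parts of the contours by showing $\Re\bigl(G_n(z)-z\log\lambda_M\bigr)$ stays below its value at $z_0$ there. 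Throughout, the estimates must be uniform in $M$, since $\lim\Delta_{M,n}=0$ permits $M\to\infty$.

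For part $\mathtt{(ii)}$, where $M$ is fixed, I would use \eqref{assumptionss} to pass to a limiting potential: with $z=n\zeta$ one gets $G_n'(n\zeta)=(M+1)\log(\zeta+1)-\log\zeta-M\log\bigl(\zeta+\tfrac{a+1}{a}\bigr)+o(1)$ (all $\log n$ terms cancelling within each factor), so the saddle equation becomes $\dfrac{(\zeta+1)^{M+1}}{\zeta\,(\zeta+\tfrac{a+1}{a})^M}=\dfrac{a^M}{(a+1)^{M+1}}e^{V_M(\theta)}$. For $\theta$ in the bulk range $(0,\tfrac{\pi}{M+1})$ this has a complex‑conjugate pair of simple roots $\zeta_\pm(\theta)$; solving it in closed form by a trigonometric substitution that expresses $\zeta$, $\zeta+1$ and $\zeta+\tfrac{a+1}{a}$ through $\sin\theta$, $\sin(M\theta)$, $\sin((M+1)\theta)$ produces exactly $V_M(\theta)$ as the attained value and $\rho(\theta)$ as the scale read off from the second derivative at $\zeta_+(\theta)$. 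A Rouch\'e/Hurwitz argument transfers this picture from the limiting potential to $G_n$ itself for $n$ large. One then deforms $\mathcal C$ through $\zeta_-(\theta)$ and $\Sigma$ through $\zeta_+(\theta)$ (or uses a single configuration passing through both), and the two complex‑conjugate saddle contributions interfere; after the normalisation in the statement they combine into $\dfrac{\sin(\pi(\xi-\eta))}{\pi(\xi-\eta)}$, the factor $\pi$ coming from the mean‑density normalisation by $n\rho(\theta)$.

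The main obstacle in both parts is the steepest‑descent bookkeeping. For $\mathtt{(i)}$ it is to make the polygamma expansions of $G_n$ and its first four derivatives \emph{uniform in $M$ and $n$} under $\lim\Delta_{M,n}=0$ -- in particular controlling how the $O(1/w^k)$ error terms accumulate over the $M$ individual factors, establishing $G_n'''(z_0)\asymp\rho_{M,n}^{-3}$ and the negligibility of $G_n^{(4)}(z_0)\rho_{M,n}^4$ -- and then to prove the \emph{global} statement that $\mathcal C$ and $\Sigma$ admit deformations along which $\Re\bigl(G_n(z)-z\log\lambda_M\bigr)$ has the correct monotonicity away from $z_0$; this is an assertion about the level sets of a harmonic function assembled from $M+1$ log‑Gammas and is the genuinely delicate analytic point. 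For $\mathtt{(ii)}$, with $M$ fixed the local expansions are routine, but one must still analyse the global steepest‑descent landscape of the fixed rational‑logarithmic potential, identify the correct pair of descent paths through $\zeta_\pm(\theta)$, and confirm that the range $\theta\in(0,\tfrac{\pi}{M+1})$ corresponds precisely to the bulk (two non‑real saddles) rather than to the edge or the exterior of the spectrum.
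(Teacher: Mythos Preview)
Your plan for part $\mathtt{(i)}$ is essentially the paper's: there the substitution is done in two stages, first $s,t\mapsto\rho_{M,n}^{3/2}s,\rho_{M,n}^{3/2}t$ and then locally $t=q_0+\rho_{M,n}^{-1/2}\tau$ with $q_0=z_0/\rho_{M,n}^{3/2}$, which composes to exactly your rescaling about $z_0$ at scale $\rho_{M,n}$; Stirling gives $f'_{M,n}(q_0)=f''_{M,n}(q_0)=0$ and $f'''_{M,n}(q_0)=2+o(1)$, i.e.\ your $G_n'''(z_0)\asymp\rho_{M,n}^{-3}$. The global contour control you flag as the delicate point is carried out by an explicit piecewise construction $\Sigma_{\text{global}}=\Sigma_{\pm}^1\cup\Sigma_{\pm}^2\cup\Sigma_{\pm}^3\cup\Sigma^4$ together with Lemma~\ref{globalref}.

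For part $\mathtt{(ii)}$ there is a genuine gap in the mechanism you propose. Deforming $\mathcal C$ through $\zeta_-$ and $\Sigma$ through $\zeta_+$ and letting the saddle contributions ``interfere'' does not produce the sine kernel: a single pair of Gaussian saddles coupled through $\tfrac{1}{s-t}$ yields one oscillating exponential, not $\sin\pi(\xi-\eta)/(\pi(\xi-\eta))$; and if both contours are pushed through both $\zeta_\pm$, the diagonal pairings are singular and the naive saddle expansion breaks down. The sine kernel arises from a \emph{residue}, not from saddle contributions. Following \cite{LWZ16}, the paper switches to the un-logged kernel $\widetilde K_n$ of \eqref{k_n}, runs $\mathcal C$ vertically through both $nW_\pm$, and splits $\Sigma=\Sigma_{\text{curved}}\cup\Sigma_{\text{vertical}}$, where $\Sigma_{\text{curved}}=n\tilde\Sigma^r$ is an explicit curve through both saddle points and $\Sigma_{\text{vertical}}$ consists of two short vertical bars at distance $\epsilon$ on either side of $\mathcal C$. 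As $\epsilon\to0$ the bars collapse onto $\mathcal C$, and Cauchy's theorem applied to the pole $\tfrac1{s-t}$ leaves the exact expression
\[
\lim_{\epsilon\to0}I_2=\frac{1}{2\pi i\,y}\int_{nW_+}^{nW_-}\Bigl(\frac{x}{y}\Bigr)^{\!s}\,ds,
\]
which after $x/y=1+(\xi-\eta)/(n\rho(\theta)x_0)+O(n^{-2})$ and $\Im W_+=\pi\rho(\theta)x_0$ gives precisely $n\rho(\theta)\,\frac{\sin\pi(\xi-\eta)}{\pi(\xi-\eta)}\,e^{\pi(\xi-\eta)\Re W_+/\Im W_+}$. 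The remaining principal-value integral $I_1$ over $\mathcal C\times\Sigma_{\text{curved}}$ is then bounded by genuine saddle-point analysis (Lemmas~\ref{sublem1}--\ref{sublem2}) as $O\bigl((n\rho(\theta))^{3/5}\bigr)$, hence negligible against $I_2\sim n\rho(\theta)$. Without isolating this residue term your argument for $\mathtt{(ii)}$ cannot recover the sine kernel.
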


\begin{remark}
The three-phase transition of singular values  remains valid for the following models.
\begin{itemize}
	\item \textbf{Ginibre product model:}
	$Y_M = X_M \cdots X_1$, where each $X_j$ is a complex Ginibre matrix of size $(n + v_{j}) \times (n + v_{j-1})$ with $v_0 = 0$ and  $v_1, \dots, v_M \geq 0$.%non-negative integers .
\end{itemize}
In this model, the modified DWR is defined as \begin{equation}
\Delta_{M,n} = \sum\limits_{j=0}^{M} \frac{1}{n + v_j}.
\end{equation}
The critical result ($\Delta_{M,n}\to \gamma \in(0,\infty)$) at the soft edge was first established in \cite{LWW23}.
\begin{itemize}
	\item \textbf{Inverse Ginibre product model:}
$\Pi_{M,K}=X_M\cdots X_1(Y_K \cdots Y_1)^{-1}$ where $X_j,j=1,\dots,M$ and $Y_k,k=1,\dots,K$ are i.i.d. complex Ginibre
matrices with size $(n+v_j )\times (n+v_{j-1})$
and $(n + u_k) \times (n + u_{k-1})$, respectively. With $v_0=u_0=u_K=0$ and  $v_j,u_k \geq 0$, thus, $\Pi_{M,K}$ is a rectangular matrix of size $(n + v_M) \times n$. %non-negative integers .
 \end{itemize}
  The squared singular values of $\Pi_{M,K}$ forms a determinantal process\cite{F0R14}.
In this model, the modified DWR should be  defined as \begin{equation}
	\Delta_{M,K,n} = \sum\limits_{j=0}^{M} \frac{1}{n + v_j}+\sum\limits_{k=0}^{K} \frac{1}{n + u_j}.
\end{equation}
\end{remark}
   
 The proofs of the aforementioned theorems  build on  an analogous procedure  to that of \cite{LWW23}, which addresses three distinct phase regimes. %However, our methodology distinguishes itself by explicitly incorporating both the unitary matrix dimension and the size of its truncated submatrix, along with several key insights.
 Our  central contribution   is the derivation of a comprehensive parameter, the modified  DWR, which provides a unified characterization of all three observed phenomena.
 Furthermore, in the low DWR regime—particularly for the special case involving two distinct matrix sizes—we establish edge and bulk  limits through new  parameterizations of the spectrtal edge  and  the limiting  density. This greatly extend the results for the classical complex Wishart ensemble.
 Finally, our approach naturally extends to other random matrix product models. Notable examples include products of complex Ginibre rectangular matrices, as well as products of Ginibre and inverse Ginibre matrices, the latter of which encompasses the spherical ensemble as a special case. 
 
The rest of this paper is organized as follows. In  next Section \ref{sect.2} we prove Theorems   \ref{supthm}, \ref{critthm} and  \ref{subthm} $\mathtt{(i)}$. In Section \ref{sect3} we prove Theorem \ref{subthm} $\mathtt{(ii)}$ and  discuss a simplified model through  a parameterization expression of the limiting  density.%(Theorem\ref{TBU},\ref{TSEU}).

\section{Proofs of Theorems    \ref{supthm}, \ref{critthm} and  \ref{subthm} $\mathtt{(i)}$ }\label{sect.2}

In the proofs, we use $\psi(z)$ to denote the digama function, which admits a series representation for $z\neq 0,-1,-2,\cdots$
\begin{equation}
	\psi(z)=-\gamma_0+\sum_{n=0}^{\infty}\big(\frac
	{1}{n+1}-\frac{1}{n+z}\big),\quad \psi'(z)=\sum_{n=0}^{\infty}\frac{1}{(n+z)^2},
\end{equation}
where $\gamma_0$ is the Euler constant; see\cite{OLBC10}. By the Stirling's formula, we have as $z \to \infty$ in the sector $|\arg(z)|\leq \pi-\epsilon$ for all $\epsilon>0$, uniformly 
\begin{equation}\label{eslogg}
	\log\Gamma(z)=\big(z-\frac{1}{2}\big)\log(z)-z+\log\sqrt{2\pi}+\frac{1}{12z}+O\big(\frac{1}{z^2}\big),
\end{equation}
and 
\begin{equation}
	\psi(z)=\log(z)-\frac{1}{2z}+O\big(\frac{1}{z^2}\big).
\end{equation}

\subsection{Proof of Theorem \ref{supthm}}
By the assumption on $M$ and $n$ in this setting, we know that $M$ must tend to infinity and $n$ may tend to infinity or be a
finite number, whenever $M + n \to \infty$.

%Before going in the detail of the proof, we sketch the strategy. We group the "local" part and the "global" part of the contours $\Sigma \times \mathcal{C}$. %Then  we compute $K_n(x,y)$ as $I_1+I_2$,where $I_1$ is the intergral over the local part of $\Sigma$, while $I_2$ is that over the global part. $I_1$ is
% Then $K_n(x,y)$ is evaluated by the usual saddle point method. The "local" part of the integral turns out to be the main distrbution, while the "global" part is the insignificant part.%into $\Sigma_0$ and $\Sigma_{vertical}$ respectively.
% Then we compute $K_n(x,y)$ as $I_1+I_2$, where $I_1$ is the integral over $\mathcal{C}$ and $\Sigma_{curved}$, while $I_2$ is that over $\mathcal{C}$ and $\Sigma_{vertical}$. $I_1$ is evaluated by the usual saddle point method, and it turns out to be the insignificant part; $I_2$ is evaluated by an application of Cauchy's theorem, and it turns out to be the main distrbution.
\begin{proof}[Proof of Theorem \ref{supthm} ]

Firstly, we recall $\rho_{M,n}(k)$ and $g(k;\cdot)$ in Theorem \ref{supthm}, simple calculations give us 
\begin{align}\label{supker}
	e^{(k-1)(\xi-\eta)\rho_{M,n}(k)}K_n(g(k;\xi),g(k;\eta))=\int_{\mathcal{C}}\frac{ds}{2 \pi i} \oint_{\Sigma}\frac{dt}{2\pi i} \frac{\Gamma(t)}{\Gamma(s)}\frac
	{1}{s-t}\frac{e^{F(t;k)}}{e^{F(s;k)}}\frac{e^{(t+k-1)\rho_{M,n}(k)\xi}}{e^{(s+k-1)\rho_{M,n}(k)\eta}},
\end{align}
where 
\begin{align}
	F(t;k)=&\Big( \sum_{j=0}^M\psi(n+v_j+1-k)-\sum_{j=1}^M \psi(m_j+1-k)\Big)t\notag \\
	&-\sum_{j=0}^M \log \frac{\Gamma(t+n+v_j)}{\Gamma(n+v_j)}+\sum_{j=1}^{M} \log\frac{\Gamma(t+m_j)}{\Gamma(m_j)}.
\end{align}

Next, we set out the contours for $s$ and $t$.
For $s$, define a vertical contour passing beside $1-k$ as 
\begin{align}
	&	\mathcal{L}_{1-k}=\Big\{1-k+\frac{2}{\rho_{M,n}(k)}+iy\mid y\in \mathbb{R}\Big\},\\
	&\mathcal{L}_{1-k}^{\text{local}}=\Big\{z \in \mathcal{L}_{1-k} \mid |\Im z|\leq n^{1/4}\rho_{M,n}^{-3/4}(k)\Big\},\quad  \mathcal{L}_{1-k}^{\text{global}}=\mathcal{L}_{1-k} \setminus\mathcal{L}_{1-k}^{\text{local}}.
\end{align}
For $t$,
we define the following positively oriented contours.
For any $a\in (-n+1,1)$,
\begin{equation} 
\begin{split}
\Sigma_{-}(a)=&\Big\{a-\frac{2-i}{4}t \mid t \in [0,1]\Big\} \cup
\Big\{a-\frac{2+i}{4}+\frac{2+i}{4}t \mid t \in [0,1]\Big\}
 \cup \\
 & \Big\{-t+\frac{i}{4} \mid t \in [\frac{1}{2}-a,n-\frac{1}{2}]\Big\} \cup \Big\{t-\frac{i}{4} \mid t \in [-n+\frac{1}{2},a-\frac{1}{2}]\Big\}\cup\\ &  \Big\{-n+\frac{1}{2}-it \mid t\in [-\frac{1}{4},\frac{1}{4}]\Big\}.
\end{split}	
\end{equation}
Similarly, for $b\in(-n+1,\frac{1}{2})$,
\begin{equation} 
	\begin{split}
		\Sigma_{-}(b)=&\Big\{b+\frac{2-i}{4}t \mid t \in [0,1]\Big\} \cup
		\Big\{b+\frac{2+i}{4}-\frac{2+i}{4}t \mid t \in [0,1]\Big\}
		\cup \\
		& \Big\{t-\frac{i}{4} \mid t \in [b+\frac{1}{2},1]\Big\} \cup \Big\{-t+\frac{i}{4} \mid t \in [-1,-b-\frac{1}{2}]\Big\}\cup\\ &  \Big\{1+it \mid t\in [-\frac{1}{4},\frac{1}{4}]\Big\}.
	\end{split}	
\end{equation}
 Let $\Sigma_0(1-k)$ be the positively oriented circle centered at $1-k$ with radius $\rho_{M,n}^{-1}(k)$,
 then we choose the contour $\Sigma$ as the union of $\Sigma_0(1-k)$ and $\Sigma_{-}(\frac{1}{2}-k) \cup \Sigma_{+}(\frac{3}{2}-k)$ with $k\in \{1,2,\dots,n\}$, while $\Sigma_{+}(\frac{3}{2}-k)$ is set to be empty if $k = 1$.\\
Accordingly, we divide the integral \eqref{supker} into two parts,
\begin{equation}
	e^{(k-1)(\xi-\eta)\rho_{M,n}(k)}K_n(g(k;\xi),g(k;\eta))=I_1+I_2,
\end{equation}
where \begin{align}
	&	I_1^{*}=\int_{\mathcal{L}^{*}_{1-k}}\frac{ds}{2\pi i}\oint_{\Sigma_0(1-k)}\frac{dt}{2\pi i}\frac{\Gamma(t)}{\Gamma(s)}\frac
	{1}{s-t}\frac{e^{F(t;k)}}{e^{F(s;k)}}\frac{e^{(t+k-1)\rho_{M,n}(k) \xi}}{e^{(s+k-1)\rho_{M,n}(k) \eta}},\end{align}
here $*=\text{local, global or blank}$, and 
\begin{align}	&I_2=\int_{\mathcal{L}_{1-k}}\frac{ds}{2\pi i}\oint_{\Sigma_{-}(\frac{1}{2}-k) \cup \Sigma_{+}(\frac{3}{2}-k)}\frac{dt}{2\pi i}\frac{\Gamma(t)}{\Gamma(s)}\frac
	{1}{s-t}\frac{e^{F(t;k)}}{e^{F(s;k)}}\frac{e^{(t+k-1)\rho_{M,n}(k) \xi}}{e^{(s+k-1)\rho_{M,n}(k) \eta}}.
\end{align}
After making  change of variables
\begin{equation}\label{cov}
	s=1-k+\frac{\sigma}{\rho_{M,n}(k)},\quad t=1-k+\frac{\tau}{\rho_{M,n}(k)},
\end{equation}
we obtain
\begin{equation}
	\frac{e^{(t+k-1)\rho_{M,n}(k) \xi}}{e^{(s+k-1)\rho_{M,n}(k) \eta}}=\frac{e^{\xi \tau}}{e^{\eta \sigma}}.
\end{equation}
At the same time, for $\sigma$ and $\tau$ in a compact subset of $\mathbb{C}\setminus 0$ and  $\Delta_{M,n} \to \infty$, we have 
\begin{equation}
	\frac{\Gamma(t)}{\Gamma(s)}=\frac{\sigma}{\tau}\big(1+C_{\tau,\sigma}(|\tau|+|\sigma|)\rho_{M,n}^{-1}(k)\big),\quad \frac{1}{s-t}=\frac{\rho_{M,n}(k)}{\sigma-\tau},
\end{equation}
where $C_{\tau,\sigma}$ is bounded.

So the remaining task is to obtain asymptotic estiamtes of $I_1=I_1^{\text{local}}+I_1^{\text{global}}$ and $I_2$ as $\Delta_{M,n} \to \infty$. The key point here is to analyze the propoties of the functions $F(t;k)$ and $F(s;k)$.
To estimate the $I_1^{\text{local}}$, 
we need some properties of $F(t;k)$
\begin{align}
	F'(t;k)=&\sum_{j=0}^M\big(\psi(n+v_j+1-k)-\psi(t+n+v_j)\big) \notag \\
	&	+\sum_{j=1}^M \big(\psi(m_j+t)-\psi(m_j+1-k)\big),
\end{align}
and 
\begin{equation}
	F''(t;k)=-\sum_{j=0}^M\psi'(t+n+v_j)+\sum_{j=1}^M\psi'(t+m_j),
\end{equation}
and 
\begin{equation}F'(1-k;k)=0,\quad F''(1-k)=-\rho_{M,n}^2(k).
\end{equation}
When $t \in \Sigma_0(1-k)$, by the Taylor expansion with respect to $t$ at $1-k$, we have uniformly for all $\tau=\tau'n^{1/4}\rho_{M,n}^{1/4}(k)$  under the change of variables \eqref{cov}, where $\tau'$ is in a compact subset of $\mathbb{C}$,
\begin{equation}\label{Ftestimate}
	\begin{split}
		F(t;k)=&F(1-k;k)+\frac{1}{2}F''(1-k;k)\big(\frac{\tau}{\rho_{M,n}(k)}\big)^2+O\Big(\frac{\tau^3}{n\rho_{M,n}(k)}
	\Big)\\
          &=F(1-k;k)-\frac{\tau^2}{2}+C(\tau')n^{-1/4}\rho_{M,n}^{-1/4}(k),
\end{split}
\end{equation}
where $C(\tau')$ is bounded in $\mathbb{C}$.
For $s \in \mathcal{L}^{\text{local}}_{1-k}$, the estimate \eqref{Ftestimate} also holds true for $F(s;k)$. Combining  \eqref{Ftestimate} and some other elementary estimates for $F(t;k)$ and $F(s;k)$, we have
\begin{equation}
	I_1^{\text{local}}=\frac{1}{\rho_{M,n}(k)}\int_{2-in^{1/4}\rho_{M,n}^{1/4}(k)}^{2+in^{1/4}\rho_{M,n}^{1/4}(k)}\frac{d\sigma}{2\pi i}\oint_{|\tau|=1}\frac{d\tau}{2\pi i}\frac{e^{-\frac{\tau^2}{2}+\xi\tau}}{e^{-\frac{\sigma^2}{2}+\eta\sigma}}\frac{\tau}{\sigma}\frac{1}{\sigma-\tau}\Big(1+O(n^{-1/4}\rho_{M,n}^{-1/4}(k))\Big).
\end{equation}

Next, we will prove that  $I_1^{\text{global}}$ and $I_2$ can be neglected in the asymptotic analysis.
For $s\in \mathcal{L}_{1-k}^{\text{global}}$, $s=(1-k)+(2+iy)\frac{1}{\rho_{M,n}(k)}$ for $|y|>n^{1/4}\rho_{M,n}^{1/4}(k)$, 
we use 
\begin{equation}
\begin{split}
\frac{d\Re F(c+iy;k)}{dy}&=-\Im F'(s;k)|_{s=c+iy}\\
&=\sum_{j=0}^M\Im\psi(n+v_j+c+iy)-\sum_{j=1}^{M}\Im \psi(m_j+c+iy)
\end{split}	
\end{equation} 
to find that there exists some $\epsilon>0$ such that for all $|y|\geq n^{1/4}\rho_{M,n}^{1/4}(k)$,
\begin{equation}\label{esref(s;k)}
	\Re F(s;k)\geq \Re F(s_{\pm};k)+\epsilon n^{1/4}\rho_{M,n}^{1/4}(k)(|y|-n^{1/4}\rho_{M,n}^{1/4}(k)),
\end{equation}
where $s_{\pm}=1-k+(2+in^{1/4}\rho_{M,n}^{1/4}(k))\frac{1}{\rho_{M,n}(k)}$.
Combining \eqref{Ftestimate} and the value of $s_{\pm}$, we further have 
\begin{equation}\label{esf(s+-)}
	\Re \big(F(s_{\pm};k)-F(1-k;k)\big)=\frac{1}{2}n^{1/2}\rho_{M,n}^{1/2}(k)\big(1+o(1)\big).
\end{equation}
Using \eqref{esref(s;k)} and \eqref{esf(s+-)}, we have
\begin{equation}
\begin{split}
	I_1^{\text{global}}=&\frac{1}{\rho_{M,n}(k)}e^{-\frac{1}{2}n^{1/4}\rho_{M,n}^{1/4}(k)\big(1+o(1)\big)}\int_{\Re \sigma=2,|\Im \sigma|>n^{1/4}\rho_{M,n}^{1/4}(k)}\frac{d\sigma}{2\pi i}\oint_{|\tau|=1} \frac{d\tau}{2\pi i}\frac{\Gamma(t)}{\Gamma(s)}\frac{e^{\xi\tau}}{e^{\eta\sigma}}\frac{1}{\tau-\sigma}\\ \notag
	&O\big(e^{-\epsilon n^{1/2}\rho_{M,n}^{1/2}(k)(|\Im \sigma|-n^{1/4}\rho_{M,n}^{1/4}(k))}\big).\end{split}
\end{equation}
To estimate $I_2$ we need the following lemma, whose proof is left in the end of this
subsection.
\begin{lemma}\label{supergollemma}
There exists $\epsilon>0$, such that the inequality
\begin{equation}
	\Re\big(F(t;k)-F(1-k;k)\big)\leq -\epsilon\rho_{M,n}^2(k)|t-1+k|
\end{equation}
holds for all $t$ on $\Sigma_{-}(\frac{1}{2}-k) \cup \Sigma_{+}(\frac{3}{2}-k)$.
\end{lemma}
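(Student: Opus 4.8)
The plan is to recenter everything at $1-k$ and convert the estimate into a convexity statement. Put $r=t-1+k$, $b_j=n+v_j+1-k$ $(0\le j\le M)$, $b'_j=m_j+1-k$ $(1\le j\le M)$, and
\[
G_a(r)=\log\frac{\Gamma(a+r)}{\Gamma(a)}-r\,\psi(a)=\int_0^r(r-w)\,\psi'(a+w)\,dw .
\]
A direct computation, absorbing all $t$–independent constants into the value at $r=0$, gives
\[
F(t;k)-F(1-k;k)=\sum_{j=1}^M G_{b'_j}(r)-\sum_{j=0}^M G_{b_j}(r)=:-\Phi(r),
\]
where $G_a(0)=G_a'(0)=0$ and $G_a''=\psi'(\cdot+a)>0$, so $\Phi(0)=\Phi'(0)=0$ and $\Phi''(0)=\sum_{j=0}^M\psi'(b_j)-\sum_{j=1}^M\psi'(b'_j)=\rho_{M,n}^2(k)$. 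Thus the Lemma is equivalent to $\Re\Phi(r)\ge\epsilon\,\rho_{M,n}^2(k)\,|r|$ for all $t=1-k+r$ on $\Sigma_{-}(\tfrac12-k)\cup\Sigma_{+}(\tfrac32-k)$; the key elementary observations are that both contours lie in the strip $|\Im r|\le\tfrac14$ and, being disjoint from the small circle $\Sigma_0(1-k)$ sitting in the gap $(\tfrac12-k,\tfrac32-k)$, satisfy $|\Re r|\ge\tfrac12$ there, while on the straight segment from $0$ to $r$ every $\Re(b_j+\theta r)$ and $\Re(b'_j+\theta r)$ stays $\ge\tfrac12$, so no poles of $\psi',\psi''$ are encountered.

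The engine is a telescoping comparison provided by the truncation constraint $m_j\ge n+v_{j-1}$, i.e.\ $b'_j\ge b_{j-1}$ (with $b_0=n+1-k$). From $G_a(r)=r^2\int_0^1(1-\theta)\psi'(a+\theta r)\,d\theta$ one reads off $\partial_a\Re G_a(r)=\Re\bigl[r^2\int_0^1(1-\theta)\psi''(a+\theta r)\,d\theta\bigr]\le0$ on the strip (the integral is nearly real and negative because $\Re\psi''(z)<0$ and $\Re(r^2)>0$ there), so $a\mapsto\Re G_a(r)$ is non-increasing; hence $\Re G_{b'_j}(r)\le\Re G_{b_{j-1}}(r)$, and after reindexing
\[
\Re\Phi(r)\ \ge\ \Re G_{b_M}(r)+\sum_{i=1}^M\bigl(\Re G_{b_{i-1}}(r)-\Re G_{b'_i}(r)\bigr),
\]
a sum of non-negative terms whose $\psi'$–analogue at $r=0$ reproduces exactly $\rho_{M,n}^2(k)=\psi'(b_M)+\sum_{i=1}^M\bigl(\psi'(b_{i-1})-\psi'(b'_i)\bigr)$. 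It then suffices to establish, term by term, the bounds $\Re G_a(r)\ge\epsilon|r|\psi'(a)$ and $\Re G_a(r)-\Re G_A(r)\ge\epsilon|r|\bigl(\psi'(a)-\psi'(A)\bigr)$ for $1\le a\le A$. For real $r\le-\tfrac12$ these follow at once from $\psi'(a+w)\ge\psi'(a)$ ($w\le0$) and from the monotonicity of $w\mapsto\psi'(a+w)-\psi'(A+w)$, giving $G_a(r)\ge\tfrac12\psi'(a)r^2$ and its companion, hence the claim with $\epsilon=\tfrac14$ using $|r|\ge\tfrac12$; the passage to $|\Im r|\le\tfrac14$ costs only a fixed constant, through the uniform comparisons $\Re\psi'(z)\asymp\psi'(\Re z)$ and $\Re(r^2)\gtrsim|r|^2\gtrsim|\Im(r^2)|$ valid on the strip, which turn the real-axis inequalities into their complex counterparts. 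This disposes of $\Sigma_{-}(\tfrac12-k)$, including the short diagonal and vertical sub-arcs, where $|r|\ge\tfrac12$ is the only ingredient.

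The delicate part is $\Sigma_{+}(\tfrac32-k)$, where $\Re r$ ranges over $[\tfrac12,k]$ and may be as large as $\asymp n$. Keeping the integral form, for $r>0$ one has $\Re G_a(r)\gtrsim r^2\psi'(a+r)$, hence $\Re\Phi(r)\gtrsim r^2\,\Phi''(r)$, and the point is that the \emph{super-linear} factor $r^2$ must be exploited rather than discarded. When $k$ is small relative to $n$, the box $\Sigma_+(\tfrac32-k)$ has width $\asymp k$, so $|r|=O(k)$ and $\Phi''(r)=\rho_{M,n}^2(k)\bigl(1+o(1)\bigr)$ because $|\Phi'''|$ on $[0,r]$ is $O\bigl((\text{small})\sum_j\psi'(b_j)\bigr)$; this yields $\Re\Phi(r)\gtrsim r^2\rho_{M,n}^2(k)\ge\tfrac12|r|\rho_{M,n}^2(k)$. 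When $k$ is comparable to $n$ one instead weighs $\Re\Phi(r)\gtrsim r^2\psi'(b_M+r)=r^2\psi'(n+v_M+1)\asymp r^2/(n+v_M)$ against $\rho_{M,n}^2(k)|r|$; this is precisely the regime in which $\rho_{M,n}^2(k)$ is bounded below only by a constant while $|r|\asymp k\asymp n$, so $r^2/(n+v_M)$ dominates. Reconciling these two regimes uniformly — equivalently, excluding configurations in which $\rho_{M,n}^2(k)$ is anomalously large compared with $\sum_j\psi'(b_j)$ while $|r|$ is simultaneously large — is where the standing hypothesis $\Delta_{M,n}\to\infty$ of this subsection is invoked, and it is the main obstacle of the argument.
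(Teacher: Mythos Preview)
Your recentering $r=t-(1-k)$ and the telescoping decomposition $\Phi(r)=G_{b_M}(r)+\sum_{i=1}^M\bigl(G_{b_{i-1}}(r)-G_{b'_i}(r)\bigr)$ via the truncation constraint $m_j\ge n+v_{j-1}$ are a clean reorganization, and the treatment of $\Sigma_{-}(\tfrac12-k)$ is essentially sound: for real $r<0$ both $\psi'(a+\theta r)$ and $\psi'(a+\theta r)-\psi'(A+\theta r)$ are nondecreasing in $\theta$, so the remainder integral dominates $\tfrac12 r^2$ times its value at $\theta=0$, and $|r|\ge\tfrac12$ converts this to the linear bound. (The passage to $|\Im r|\le\tfrac14$ via ``$\Re\psi''<0$ and $\Re(r^2)>0$'' is looser than you suggest, since the imaginary parts of $r^2$ and of $\psi''$ could conspire, but this can be made precise.)

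The genuine gap is on $\Sigma_{+}(\tfrac32-k)$, where you yourself call it ``the main obstacle'' and stop short of an argument. Two concrete errors: first, the claim that $\rho_{M,n}^2(k)$ is ``bounded below only by a constant'' when $k\asymp n$ is false---take all $v_j=0$ and $k=n$, so every $b_j=1$ and $\rho_{M,n}^2(k)\ge(M+1)\psi'(1)-\sum_j\psi'(m_j{+}1{-}n)\asymp M$; second, because of this, keeping only the single term $G_{b_M}$ gives $\Re\Phi(r)\gtrsim r^2/(n+v_M)\le n$, which cannot control $\epsilon|r|\rho_{M,n}^2(k)\asymp Mn$ in the present regime $M\gg n$. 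The fix is simply not to discard terms: your term-by-term inequalities $G_a(r)\ge\epsilon|r|\psi'(a)$ and $G_a(r)-G_A(r)\ge\epsilon|r|(\psi'(a)-\psi'(A))$ in fact also hold for $r\ge\tfrac12$ with a universal $\epsilon$. From $\psi'(z)\ge1/z$ one gets $G_a(r)\ge(a+r)\log(1+r/a)-r=a\,h(r/a)$ with $h(t)=(1+t)\log(1+t)-t$; since $h(t)/t$ is increasing with $h(t)/t\sim t/2$ near $0$, one checks $a\,h(r/a)\ge c\,r\min(1,r/a)\ge c'|r|\psi'(a)$ for $a\ge1$, $r\ge\tfrac12$. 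The difference bound follows by writing $G_a(r)-G_A(r)=\int_a^A\!\int_0^r(r-w)|\psi''(s+w)|\,dw\,ds$ and applying the same estimate to the inner integral with $|\psi''|$ in place of $\psi'$. Summing the telescoped pieces then reproduces exactly $\epsilon|r|\rho_{M,n}^2(k)$; no appeal to $\Delta_{M,n}\to\infty$ is needed.

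For comparison, the paper's proof is quite different and in one respect narrower: it does not telescope, restricts to $k$ fixed (with $n-k$ fixed handled by a stated duality), and argues by showing that $x\mapsto\Re F(x\pm\tfrac i4;k)$ is concave along the horizontal parts of the contour with the correct sign of the first derivative at the inner endpoints, together with a Taylor expansion on the short diagonal pieces near $1-k$ and a direct evaluation at the far-left vertical segment. Your corrected argument would actually be more uniform in $k$.
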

Together with the above lemma, combining the estimates \eqref{Ftestimate} for $s\in \mathcal{L}^{\text{local}}_{1-k}$, \eqref{esref(s;k)} and \eqref{esf(s+-)} for $s\in \mathcal{L}^{\text{global}}_{1-k}$, noting that $|s-t|>1/4$ when $\Delta_{M,n}$ is large, we have for some $\epsilon,\epsilon'>0$,
\begin{equation}\label{esI2}
	\begin{split}
	I_2=	&\int_{\mathcal{L}_{1-k}}\frac{ds}{2\pi i}\oint_{\Sigma_{-}(\frac{1}{2}-k) \cup \Sigma_{+}(\frac{3}{2}-k)}\frac{dt}{2\pi i}\frac{\Gamma(t)}{\Gamma(s)}	\frac{e^{(t+k-1)\rho_{M,n}(k) \xi}}{e^{(s+k-1)\rho_{M,n}(k) \eta}}%O\big(e^{-\epsilon\rho_{M,n}(k)|t-1+k|}\big)
	\\ 
	&\times \begin{cases}
	\big(1+O(n^{-1/4}\rho_{M,n}^{-1/4}(k))\big)e^{-\frac{F''(1-k;k)(s-1+k)^2}{2}},& |\Im s|\leq n^{1/4}\rho_{M,n}^{-3/4}(k),\\
	e^{-\frac{1}{2}n^{1/2}\rho_{M,n}^{1/2}(k)-\epsilon'n^{-1/4}\rho_{M,n}^{3/4}(k)_(|\Im s|-n^{1/4}\rho_{M,n}^{-3/4}(k))},& \text{otherwise}.\\
	\end{cases}
	\end{split}
\end{equation}
%We note that all the O estimates above are from $F(\cdot;k)$ and are independent of $\xi,\eta$.
And for $\xi,\eta$ in a compact set of $\mathbb{R}$, we have
\begin{equation}
	|e^{\xi\rho_{M,n}(k)(t+k-1)}/e^{\eta\rho_{M,n}(k)(s+k-1)}|\leq e^{C\rho_{M,n}(k)(|t+k-1|+|s+k-1|)}
\end{equation}
for some $C>0$.

Now  using the standard steepest-descent technique on $I_1^{\text{local}}$, $ I_1^{\text{global}}$ and $I_2$, we have that 
$|I_1^{\text{global}}|=o(e^{-n^{1/2}\rho_{M,n}^{1/2}(k)/4})$, $|I_2|=o(e^{-\epsilon\rho_{M,n}^2(k)/4})$.  Under the change of variables \eqref{cov}, when $n \to \infty$ and $ \Delta_{M,n} \to \infty$, we have
\begin{equation}\label{esI1}
I_1=\Big(1+O(\rho_{M,n}^{-1}(k))\Big)\frac{1}{\rho_{M,n}(k)}\int_{2-i\infty}^{2+i\infty}\frac{d\sigma}{2\pi i}\oint_{|\tau|=1}\frac{d\tau}{2\pi i}\frac{e^{-\frac{\tau^2}{2}+\xi\tau}}{e^{-\frac{\sigma^2}{2}+\eta\sigma}}\frac{\tau}{\sigma}\frac{1}{\sigma-\tau}.
\end{equation}
%uniformly for $\xi,\eta$ in a compact subset of $\mathbb{R}$.
 From \eqref{esI2} and \eqref{esI1}, it is easy to see  that the integral $I_1$ concentrates on $I_1^{\text{local}}$ and $I_2$ is negligible relative to $I_1$. 

Combining $\lim\limits_{M+ n\to \infty}\rho_{M,n}(k) I_1=\frac{1}{\sqrt{2\pi}}e^{-\eta^2/2}$, we thus complete the proof of  Theorem \ref{supthm}.\end{proof}
\begin{proof}[Proof of Lemma \ref{supergollemma}]
	Assume $k$ is finite. When $n-k$ is finite, the proof follows by modifying the duality argument. For $t \in B(1-k, 1+k)$, we have
	\begin{equation}
		\rho_{M,n}^{-2}(k)\big(F(t;k) - F(1-k;k)\big) = -\frac{1}{2} \big(t - (1-k)^2\big) + O(n^{-1}),
	\end{equation}
	and $\frac{d}{dx} \Re F\big(-k \pm \frac{i}{4};k\big) > 0$. Consequently, along the segment $B(1-k,1+k) \cap \left[ \Sigma_{-}\big(\frac{1}{2}-k\big) \cup \Sigma_{+}\big(\frac{3}{2}-k\big) \right]$, the function $\rho_{M,n}^{-2}(k) \Re \big(F(t;k) - F(1-k;k)\big)$ decreases as $t$ moves away from $1-k$.
	
	Next, on the horizontal components of $\Sigma_{-}\big(\frac{1}{2}-k\big)$, 
	\begin{align*}
		\frac{d^2}{dx^2} \Re F\big(x \pm \tfrac{i}{4};k\big) 
		&= \Re \left. \frac{d^2F(t;k)}{dt^2} \right|_{t=x\pm\frac{i}{4}} \\
		&= -\left( \sum_{j=0}^M \Re \psi'\big(x+n+v_j \pm \tfrac{i}{4}\big) - \sum_{j=1}^M \Re \psi'\big(x+m_j \pm \tfrac{i}{4}\big) \right)< 0.
	\end{align*}
	Since $\frac{d}{dx} \Re F\big(x \pm \frac{i}{4};k\big) > 0$ at the endpoints of these horizontal contours, it follows that $\rho_{M,n}^{-2}(k)\Re F(t;k) $ decreases as $t$ moves away from $1-k$.
	Finally, for $t = -n + \frac{1}{2} + iy$ with $y \in [-\frac{1}{4}, \frac{1}{4}]$, a direct calculation yields
	\begin{equation}
		\Re \big(F(t;k) - F(1-k;k)\big) = -\rho_{M,n}^{2}(k) (n+1-k)^2 \big(1 + o(1)\big).
	\end{equation}

	Combining these estimates gives the desired result.
\end{proof}

\subsection{Proof of Theorem \ref{critthm}}
\begin{proof}[Proof of Theorem \ref{critthm}]

Applying the Euler's reflection formula for the gamma function
\begin{equation}
	\frac{\Gamma(t)\Gamma(1-t)}{\Gamma(s)\Gamma(1-s)}=\frac{\sin(\pi s)}{\sin(\pi t)},
\end{equation}
and the identity
\begin{equation}
\frac{\sin(\pi s)}{\sin(\pi t)}=\frac{\sin(\pi(s-t))}{\sin(\pi t)}e^{i \pi t}+e^{i \pi(t-s)},
\end{equation}
we rewrite $K_n(x,y)$ as
\begin{align}
K_n(x,y)&=\int_{\mathcal{C}}\frac{ds}{2\pi i}\oint_{\Sigma}\frac{dt}{2\pi i}\frac{e^{xt-ys}}{s-t}\frac{\Gamma(1-s)}{\Gamma(1-t)}\Big(\frac{\sin(\pi(s-t))}{\sin(\pi t)}e^{i \pi t}+e^{i \pi(t-s)}\Big)\notag\\
&\times\frac{\Gamma(s+n)}{\Gamma(t+n)}\prod_{j=1}^M\frac{\Gamma(s+n+v_j)\Gamma(t+m_j)}{\Gamma(t+n+v_j)\Gamma(s+m_j)}\notag\\
&=\int_{\mathcal{C}}\frac{ds}{2\pi i}\oint_{\Sigma}\frac{dt}{2\pi i}\frac{e^{xt-ys}}{s-t}\frac{\Gamma(1-s)}{\Gamma(1-t)}\frac{\sin(\pi(s-t))}{\sin(\pi t)}e^{i \pi t}\notag\\
&\times \frac{\Gamma(s+n)}{\Gamma(t+n)}\prod_{j=1}^M\frac{\Gamma(s+n+v_j)\Gamma(t+m_j)}{\Gamma(t+n+v_j)\Gamma(s+m_j)},
\end{align}
where the poles with respect to $t$ within $\Sigma$ are zeros of $\sin(\pi t)$.

After making change of variables $s\to s-[nu], t \to t-[nu]$ and 
 $x=g(\xi), y=g(\eta)$ in \eqref{cribgf}, we obtain
\begin{equation}
	K_n(g(\xi),g(\eta))=e^{\big(g(\eta)-g(\xi)\big)[nu]}\int_{\mathcal{C}}\frac{ds}{2\pi i}\oint_{\Sigma_{RE}}\frac{dt}{2\pi i}\frac{\sin \pi(s-t)}{s-t}\frac{e^{i \pi t}}{\sin \pi t} \frac{e^{f(\eta;s)}}{e^{f(\xi;t)}},
\end{equation}
where 
\begin{align}
	f(\xi;t)=&-tg(\xi)+\log\left(\frac{\Gamma(1+[nu]-t)}{\Gamma(1+[nu])}\right)\notag\\
	&+\sum_{j=0}^{M}\log\left(\frac{\Gamma(n-[nu]+t+v_j)}{\Gamma(n-[nu]+v_j)}\right)-\sum_{j=1}^{M}\log\left(\frac{\Gamma(m_j-[nu]+t)}{\Gamma(m_j-[nu])}\right),
\end{align}
and $\Sigma_{RE}$ is a rectangular contour with four points $\frac{1}{2}+[nu]-n\pm \frac{i}{2}$ and $\frac{1}{2}+[nu]\pm \frac{i}{2}$.

For $t \in B(0,n^{1/4})$ and $\xi$ in a compact subset of $\mathbb{R}$, we have uniformly
\begin{align}
	f(\xi;t)&=\frac{1}{2}\left(
	\sum_{j=0}^M
\frac{1}{n-[nu]+v_j}-\sum_{j=1}^M	\frac{1}{m_j-[nu]}+\frac{1}{1+[nu]}\right)t^2 \notag\\
&+\left(\sum_{j=0}^M\psi(n-[nu]+v_j)-\sum_{j=1}^M \psi(m_j-[nu])-\psi(1+[nu])-g(\xi)\right)t+O(n^{-1/4})\notag\\
&=\frac{1}{2}\big(\gamma'+o(1)\big)t^2-\xi t +O(n^{-1/4}),
\end{align}
where $\gamma'$ is defined in \eqref{gamma'}.\\
So as $s \in i\mathbb{R}$, $t\in \Sigma_{RE}$ and $s,t\in B(0,n^{1/4})$, $f(\xi;0)$ is a constant independent of $\xi$, we obtain
\begin{equation}
	\frac{e^{f(\eta;s)}}{e^{f(\xi;t)}}=\frac{e^{\frac{1}{2}\big(\gamma'+o(1)\big)s^2-\eta s}}{e^{\frac{1}{2}\big(\gamma'+o(1)\big)t^2-\xi t}}\Big(1+O(n^{-1/4})\Big).
\end{equation}

Next, we estimate the integrand when either $s$ or $t$ is not int $B(0,n^{1/4})$.
When $s=iy$, we have the following properties of $f(\eta;s)$, whose proof is left in the end of this
subsection.
%When $s=iy$, we have
%\begin{align}
%	&	\frac{d \Re f(\eta;iy)}{dy}=-\Im \frac{d f(\eta;z)}{dz}|_{z=iy}\notag\\
%	&=-\Im\big(\psi(n-[nu]+iy)-\psi(1+[nu]-iy)+\sum_{j=1}^M(\psi(n-[nu]+v_j+iy)-\psi(m_j-[nu]+iy))\big)\notag\\
%	&=-\arctan\frac{y}{1+[nu]}+\sum_{j=1}^M\arctan\frac{y}{m_j-[nu]}-\sum_{j=0}^M\arctan\frac{y}{n-[nu]+v_j},
%\end{align}
\begin{lemma}\label{fsglobal}
	There exists a constant $\epsilon>0$ such that 
	\begin{equation}
		\Re f(\eta;iy)\leq -\epsilon n^{1/4}(|y|-n^{1/4}) \quad |y|>n^{1/4}.
	\end{equation}
\end{lemma}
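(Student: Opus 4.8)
\textbf{Proof plan for Lemma \ref{fsglobal}.}
The plan is to differentiate $\Re f(\eta; iy)$ with respect to $y$ and exploit monotonicity, exactly paralleling the treatment of $\Re F(s;k)$ on the global part of the vertical contour in the proof of Theorem \ref{supthm}. Writing $s = iy$, we have $\frac{d}{dy}\Re f(\eta; iy) = -\Im f'(\eta; iy)$, and from the definition of $f(\xi;t)$ this equals
\begin{equation}
-\Im\Big(-g(\eta) - \psi(1+[nu]-iy) + \sum_{j=0}^M \psi(n-[nu]+v_j+iy) - \sum_{j=1}^M \psi(m_j-[nu]+iy)\Big).
\end{equation}
The terms $g(\eta)$ and $\psi(1+[nu])$-type contributions are handled via $\Im\psi(a+iy)$ estimates; the key observation is that for $a>0$ fixed and $y>0$ one has $\Im\psi(a+iy) = \arctan$-type growth, and more precisely $\Im\psi(a+iy)$ is increasing in $y$ with $\Im\psi(a+iy) \geq c\min(y/a, 1)$ for a universal $c>0$. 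Since $a = n-[nu]+v_j \asymp n(1-u)$ and $a = m_j-[nu] \asymp n$, and there is one extra positive term from the $j=0$ summand (reflecting $\Delta_{M,n}\to\gamma>0$, i.e. the digamma sums do not fully cancel), the sum $\sum_{j=0}^M \Im\psi(n-[nu]+v_j+iy) - \sum_{j=1}^M \Im\psi(m_j-[nu]+iy)$ stays bounded below by a positive multiple of $\min(y/n, 1)$ times $M$, hence by $\epsilon' y$ for $|y| \le n$ and by $\epsilon' n^{1/4}$ once $|y| \ge n^{1/4}$. I would also check that the bounded term $-\Im(-g(\eta))$, which is linear and $O(1)$, and the single $\psi(1+[nu]-iy)$ term cannot overwhelm this; since $1+[nu]$ can be comparable to $n$, its imaginary part is likewise controlled, and in any case it enters with a sign that, combined with the $j=0$ excess term, preserves the lower bound.

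Having established $\frac{d}{dy}\Re f(\eta;iy) \le -\epsilon n^{1/4}$ for $y \ge n^{1/4}$ (and the mirror inequality for $y \le -n^{1/4}$ by the symmetry $\overline{f(\eta;iy)} = f(\eta;-iy)$ up to real shifts), integration from the boundary point $iy_0$ with $y_0 = n^{1/4}$ gives
\begin{equation}
\Re f(\eta; iy) \le \Re f(\eta; in^{1/4}) - \epsilon n^{1/4}(|y| - n^{1/4}).
\end{equation}
It then remains to show $\Re f(\eta; in^{1/4}) \le 0$ (or at worst $o(n^{1/4})$ so it can be absorbed), which follows from the quadratic Taylor expansion already recorded above: at $t = in^{1/4}$ one has $f(\xi;t) = \tfrac12(\gamma'+o(1))(in^{1/4})^2 - \xi(in^{1/4}) + O(n^{-1/4})$, so $\Re f(\eta;in^{1/4}) = -\tfrac12\gamma' n^{1/2}(1+o(1)) < 0$ for $n$ large. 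Combining the two estimates yields the claimed bound, after renaming $\epsilon$.

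The main obstacle I anticipate is not the monotonicity mechanism itself but the bookkeeping in the regime where the individual digamma arguments are \emph{not} all comparable to $n$ — in particular, the term $\psi(1+[nu]-iy)$ has argument of size $[nu]$, which for $u$ close to $0$ is small, and for $u$ close to $1$ is close to $n$; one must verify the lower bound on $\frac{d}{dy}\Re f(\eta;iy)$ uniformly over $u$ in compact subsets of $(0,1)$ and over the relevant range of $y$, including the crossover near $y \asymp [nu]$ where $\Im\psi(1+[nu]-iy)$ transitions from linear to saturated behavior. A secondary technical point is that $M \to \infty$ is allowed here as well, so the constant $\epsilon$ must be extracted from the structural cancellation encoded in $\Delta_{M,n} \to \gamma$ rather than from any single term; concretely, one wants $\sum_{j=0}^M \frac{y}{(n-[nu]+v_j)^2+y^2} \cdot (\text{something}) - \sum_{j=1}^M(\cdots) \gtrsim \gamma' \min(y,1)$-type bounds, which should drop out of the same convexity/positivity of $\psi'$ used to define $\gamma'$ in \eqref{gamma'}. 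Once these uniformities are pinned down, the rest is the routine steepest-descent packaging already used for $I_2$ in Theorem \ref{supthm}.
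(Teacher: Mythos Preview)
Your proposal follows essentially the same route as the paper's proof: differentiate $\Re f(\eta;iy)$ in $y$, rewrite the resulting $\Im f'$ in terms of digamma functions, replace $\Im\psi(a+iy)$ by its $\arctan(y/a)$ approximation, and read off negativity of the derivative for $|y|>n^{1/4}$. The paper's argument is in fact terser than yours --- it stops after displaying the arctangent expression and asserts ``the dominant behavior of the arctangent terms ensures the desired result'' --- so your added care about the boundary value $\Re f(\eta;in^{1/4})=-\tfrac12\gamma' n^{1/2}(1+o(1))<0$ and about uniformity in $u$ is consistent with (and slightly more explicit than) what the paper does.
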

By Lemma \ref{fsglobal}, it is easy to see that $\Re f(\eta;iy)$ dominates the factor $\sin \pi(s-t)$ as $s\to \infty$ on the verrtical contour.
On the other hand, as $t$ moves the right endpoint along $\{x\pm \frac{i}{2}\mid x\in [-n+[nu]+1/2,n^{1/4}]\}$ or to the left endpoint along $\{x\pm \frac{i}{2}\mid  x\in [n^{1/4}, [nu]+1/2]\}$, $\Re f(\xi;t)$ decrease monotonically. To see it, we check it on these horizontal contours. 
The second derivative 
\begin{align}
\frac{d^2\Re f(\xi;x\pm i/2)}{dx^2}
=	&\Re \Big(
\sum_{j=0}^M\psi'(n+v_j-[nu]+x\pm i/2)-\sum_{j=1}^M\psi'(mj-[nu]+x\pm i/2)\\ \notag
&+\psi'\big(1+[nu]-(x\pm i/2)\big)\Big)>0\end{align}	
and the first derivative 
\begin{equation}
	\left.\frac{d\Re f(\xi;x\pm i/2))}{dx}\right|_{x=n^{1/4}}>0,\quad \left.\frac{d\Re f(\xi;x\pm i/2))}{dx}\right|_{x=-n^{1/4}}<0,
	\end{equation}
so $\Re f(\xi;x \pm i/2)$ increases monotonically on these horizontal contours to the endpoints $\pm n^{1/4}\pm i/2$.
At last, on the two vertical lines of $\Sigma_{RE}$, applying the Stirling formula for $-\frac{1}{2}\leq y\leq \frac{1}{2}$, we have 
\begin{align}\label{valf-}
&	\Re f(\xi,-n+[nu]+\frac{1}{2}+iy)\\ \notag
&	=\sum_{j=1}^M (m_j-n)\log\big(\frac{m_j-nu}{m_j-n}\big)-v_j\log\big(\frac{n+v_j-nu}{v_j}\big)+O(n\log n),
\end{align}
and 
\begin{align}\label{valf+}
&\Re f(\xi,[nu]+\frac{1}{2}+iy)\\ \notag  
&=\sum_{j=1}^{M}m_j\Big(u+\log\big(\frac{m_j-nu}{m_j}\big)\Big)-\sum_{j=0}^{M}(n+v_j)\Big(u+\log\big(\frac{n+v_j-nu}{n+v_j}\big)\Big)+O(n\log n).
\end{align}

Togeter with  \eqref{valf+}, \eqref{valf-} and Lemma \ref{fsglobal}, we see that
the double integral concentrates on the origin of $s,t \in B(0,n^{1/4})$, that is 
\begin{align}\label{crilocal}
&	e^{\big(g(\xi)-g(\eta)\big)[nu]}K_n(g(\xi),g(\eta))=(1+O(n^{-1/4})) \notag\\
&\times	\big(\int_{-in^{1/4}}^{in^{1/4}}\frac{ds}{2\pi i}\int_{-i/2-n^{1/4}}^{-i/2+n^{1/4}}\frac{dt}{2\pi i}+\int_{-in^{1/4}}^{in^{1/4}}\frac{ds}{2\pi i}\int_{i/2+n^{1/4}}^{i/2-n^{1/4}}\frac{dt}{2\pi i}\big)\\ \notag
&\times \frac{\sin \pi(s-t)}{s-t}\frac{e^{i \pi t}}{\sin \pi t}\frac{e^{\frac{\gamma'}{2}s^2-\eta s}}{e^{\frac{\gamma'}{2}t^2-\xi t}}.
\end{align}
Substituting the identity
\begin{equation}
	\frac{\sin\pi(s-t)}{s-t}=\frac{\pi}{2}\int_{-1}^1dwe^{i\pi(t-s)w}
\end{equation}
%\begin{equation}
%	\int_{-i\infty}^{\infty}\frac{\pi}{2}e^{i\pi(s-t)w}e^{\frac{\gamma'}{2}s^2-\eta s}ds=\sqrt{\frac{\pi}{8\gamma'}}e^{\frac{(\pi w-i\eta)^2}{2\gamma'}}e^{i\pi wt}
%\end{equation}
into the \eqref{crilocal} and as $n\to \infty$, this gives us 
\begin{align}
&\lim_{n \to \infty}e^{\big(g(\xi)-g(\eta)\big)[nu]}K_n(g(\xi),g(\eta))\notag\\
&=\sqrt{\frac{\pi}{8\gamma'}}\int_{-1}^1 e^{\frac{(\pi w-i\eta)^2}{2\gamma'}}\big(\int_{-i/2-\infty}^{-i/2+\infty}-\int_{i/2-\infty}^{i/2+\infty}\big)\frac{dt}{2\pi i}\frac{e^{i \pi t}}{\sin \pi t}e^{-\frac{\gamma'}{2}t^2+\xi t +i\pi wt}dw\\
&=\sqrt{\frac{\pi}{8\gamma'}}\int_{-1}^1 e^{\frac{(\pi w-i\eta)^2}{2\gamma'}}\frac{1}{\pi}\theta\big(\frac{\pi w-i\xi}{2\pi},\frac{i\gamma'}{2\pi}\big)dw,
\end{align}
and the last equality follows form the residue theorem to evaluate the inner integral with respect to $t$, where the poles are $z\in \mathbb{Z}$.

The proof of $u=0$ is similar to the  proof of $u\in(0,1)$, only the  scaling function $g(\cdot)$ and corresponding parameters need to be substituted, % the double coutour integral concentrate on regions $B(0,n^{1/4})$, 
so we omit it. 

We thus complete the proof of Theorem\ref{critthm}.\end{proof}
\begin{proof}[Proof of Lemma \ref{fsglobal}]
	We begin by computing the derivative of $\Re f(\eta; iy)$ with respect to $y$:
	\begin{equation}
		\frac{d}{dy}\Re f(\eta;iy) = -\Im \left. \frac{df(\eta;z)}{dz} \right|_{z=iy}.
	\end{equation}
	Substituting the given expression for the derivative, we obtain
	\begin{align}
		\frac{d}{dy}\Re f(\eta;iy) = \Im \bigg( \psi(1 + [nu] - iy) + \sum_{j=1}^M \psi(m_j - [nu] + iy)- \sum_{j=0}^M \psi(n - [nu] + v_j + iy) \bigg).
	\end{align}
	This simplifies to
	\begin{align}
	&	\frac{d}{dy}\Re f(\eta;iy)= -\left(1 + O(n^{-1})\right) \notag\\
		&\times \Bigg( \arctan\left(\frac{y}{1 + [nu]}\right) + \sum_{j=0}^M \arctan\left(\frac{y}{n - [nu] + v_j}\right)
 - \sum_{j=1}^M \arctan\left(\frac{y}{m_j - [nu]}\right) \Bigg).
	\end{align}
	For $|y| > n^{1/4}$, the dominant behavior of the arctangent terms ensures the desired result.
\end{proof}

\subsection{Proof of Theorem \ref{subthm} $\mathtt{(i)}$}
\begin{proof}[Proof of Theorem \ref{subthm} $\mathtt{(i)}$]

After changing the variables $s,t \to \rho_{M,n}^{3/2}s,\rho_{M,n}^{3/2}t$, we obtain
\begin{equation}
K_n(x,y)=\rho_{M,n}^{3/2}\int_{\mathcal{C}}\frac{ds}{2 \pi i} \oint_{\Sigma}\frac{dt}{2\pi i}\frac{e^{\rho_{M,n}^{3/2}(xt- ys)}}{s-t}
	\prod_{j=0}^{M}\frac{\Gamma(\rho_{M,n}^{3/2}s+n+v_j)}{\Gamma(\rho_{M,n}^{3/2}t+n+v_j)}\frac{\Gamma(\rho_{M,n}^{3/2}t+m_j)}{\Gamma(\rho_{M,n}^{3/2}s+m_j)}.
\end{equation}
After changing $x=g(\xi),y=g(\eta)$ in \eqref{subcgd}, we have 
\begin{equation}
K_n(g(\xi),g(\eta))=\rho_{M,n}^{3/2}\int_{\mathcal{C}}\frac{ds}{2 \pi i} \oint_{\Sigma}\frac{dt}{2\pi i}\frac{e^{\rho_{M,n}^{1/2}(\xi t-\eta s)}}{s-t}e^{F_{M,n}(\rho_{M,n}^{3/2}s)-F_{M,n}(\rho_{M,n}^{3/2}t)},
\end{equation}
where 
\begin{equation}
	F_{M,n}(s)=\sum_{j=0}^M\log \frac{\Gamma(s+n+v_j)}{\Gamma(n+v_j)}-\sum_{j=1}^M \log \frac{\Gamma(s+m_j)}{\Gamma(m_j)}-\log\Gamma(s)-s\log(\lambda_M).
\end{equation}
Using  the stirling formula for $\log \Gamma(z)$ in \eqref{eslogg},
we have 
\begin{equation}
F_{M,n}(\rho_{M,n}^{3/2}s)=\rho_{M,n}^{3/2}f_{M,n}(s)+O(\Delta_{M,n}),%O(\sum_{j=0}^M\frac{1}{n+v_j}-\sum_{j=1}^M\frac{1}{m_j}),
\end{equation}
where 
\begin{align}
	f_{M,n}(s)=&\sum_{j=0}^M\frac{n+v_j}{\rho_{M,n}^{3/2}}\Big(1+\frac{\rho_{M,n}^{3/2}s}{n+v_j}\Big)\log \Big(1+\frac{\rho_{M,n}^{3/2}s}{n+v_j}\Big)-\sum_{j=1}^M \frac{m_j}{\rho_{M,n}^{3/2}}\Big(1+\frac{\rho_{M,n}^{3/2}s}{m_j}\Big)\log \Big(1+\frac{\rho_{M,n}^{3/2}s}{m_j}\Big)\\ \notag
	&-s\log s
	+s\Big(\sum_{j=0}^M\log(n+v_j)-\sum_{j=1}^M\log m_j-\log\rho_{M,n}^{3/2}-\log\lambda_M\Big).
\end{align}
Set $v_{M,n}=\sum_{j=0}^M\log(n+v_j)-\sum_{j=1}^M\log m_j-\log\rho_{M,n}^{3/2}-\log \lambda_M$, then we have 
\begin{equation}
	f_{M,n}'(s)=\sum_{j=0}^M \log\Big(1+\frac{\rho_{M,n}^{3/2}s}{n+v_j}\Big)-\sum_{j=1}^M \log \Big(1+\frac{\rho_{M,n}^{3/2}s}{m_j}\Big)-\log s +v_{M,n},
\end{equation}
and 
\begin{equation}
f_{M,n}''(s)=\rho_{M,n}^{3/2}\sum_{j=0}^M\left(\frac{1}{n+v_j+\rho_{M,n}^{3/2}s}-\frac{1}{m_j+\rho_{M,n}^{3/2}s}\right),
\end{equation}
\begin{equation}
	f_{M,n}'''(s)=\rho_{M,n}^3\sum_{j=0}^M\left(\frac{1}{(m_j+\rho_{M,n}^{3/2}s)^2}-\frac{1}{(n+v_j+\rho_{M,n}^{3/2}s)^2}\right).
\end{equation}
On the other hand, we recall $z_0$ is given in \eqref{defz_0}
and note that $q_0:=\frac{z_0}{\rho_{M,n}^{3/2}}$, thus we have 
\begin{equation}
	f_{M,n}'(q_0)=0,\quad 	f_{M,n}''(q_0)=0,
\end{equation}
and \begin{equation}
	f_{M,n}'''(q_0)=2+o(1).
\end{equation}

Next in the double contour integral formula for $K_n(x,y)$, we deform the contour for $s$ as $\mathcal{C}=\mathcal{C}_{\text{local}} \cup \mathcal{C}_{\text{global}}$, where  $\mathcal{C}_{\text{local}}=\mathcal{C}_{\text{local},+} \cup \mathcal{C}_{\text{local},-}$, such that 
\begin{align}
&\mathcal{C}_{\text{local},+}=\Big\{q_0+\big(1/\rho_{M,n}^{3/2}\big)^{1/3}+re^{\frac{4\pi i}{3}}\mid -\big(1/\rho_{M,n}^{3/2}\big)^{3/10}\leq r \leq 0\Big\},\\
&\mathcal{C}_{\text{local},-}=\Big\{q_0+\big(1/\rho_{M,n}^{3/2}\big)^{1/3}+re^{\frac{5\pi i}{3}}\mid 0\leq r \leq \big(1/\rho_{M,n}^{3/2}\big)^{3/10}\Big\},
\end{align}
and 
\begin{align}
\mathcal{C}_{\text{global}}=&\Big\{q_0+\big(1/\rho_{M,n}^{3/2}\big)^{1/3}-\big(1/\rho_{M,n}^{3/2}\big)^{1/3}e^{\frac{4 \pi i}{3}}+iy \mid y\geq 0\Big\}\notag \\
&\cup \Big\{q_0+\big(1/\rho_{M,n}^{3/2}\big)^{1/3}+\big(1/\rho_{M,n}^{3/2}\big)^{1/3}e^{\frac{5 \pi i}{3}}+iy \mid y\leq 0\Big\}.
\end{align}
Deform the contour $\Sigma$ for $t$ as $\Sigma_{\text{local}} \cup \Sigma_{\text{global}} $, such that  $\Sigma_{\text{local}}=\Sigma_{\text{local},+} \cup \Sigma_{\text{local},-}$ where 
\begin{align}
	&\Sigma_{\text{local},+}=\Big\{q_0-\big(1/\rho_{M,n}^{3/2}\big)^{1/3}+re^{\frac{2\pi i}{3}}\mid 0\leq r \leq \big(1/\rho_{M,n}^{3/2}\big)^{3/10}\Big\},\\
	&\Sigma_{\text{local},-}=\Big\{q_0-\big(1/\rho_{M,n}^{3/2}\big)^{1/3}+re^{\frac{\pi i}{3}}\mid  -\big(1/\rho_{M,n}^{3/2}\big)^{3/10}\leq r \leq 0\Big\}.
\end{align}
By constructing the integral contour $\Sigma_{\text{global}}$, we can prove that $\Re f_{M,n}(t) $ attains its minimum at left end of $\Sigma_{\text{local},\pm}$. We define 
\begin{equation}
\Sigma_{\text{global}}:=\Sigma_{+}^1 \cup \Sigma_{+}^2 \cup \Sigma_{+}^3 \cup \Sigma^4 \cup  \Sigma_{-}^1 \cup \Sigma_{-}^2 \cup \Sigma_{-}^3,
\end{equation}
where 
\begin{align}
	&\Sigma_{\pm}^1= \Big\{\text{vertical line connecting the left end of $\Sigma_{\text{local},\pm}$ to $(x_1,\pm y_1)$}\Big\}, \notag\\
	&\Sigma_{\pm}^2=\Big\{t=x \pm iy \mid \frac{y-y_1}{y_2-y_1}=\frac{x-x_1}{x_2-x_1} \Big\},\notag\\
	&\Sigma_{\pm}^3=\Big\{t=x \pm iy_2 \mid x\in[-n/\rho_{M,n}^{3/2}+(2\rho_{M,n}^{3/2})^{-1},x_2]\Big\},\notag\\
	&\Sigma^4=\Big\{t=-n/\rho_{M,n}^{3/2}+(2\rho_{M,n}^{3/2})^{-1}+iy \mid y \in [-y_2,y_2]\Big\}.
\end{align}
The orientation of the contours defined as above are detemined by $\Sigma$ is positively oriented, and $\mathcal{C}$ is from $-i \infty$ to $i \infty$.

In the following, we provide the selection strategy for the contour $\Sigma_{\text{global}}$.
For $t \in \Sigma_{\pm}^1$, we set \begin{equation}
x_1=q_0-\big(\rho_{M,n}^{-3/2}\big)^{1/3}-\frac{1}{2}\big(\rho_{M,n}^{-3/2}\big)^{3/10},
\end{equation} and choose $y_1$ satisfies
$\frac{d\Re f_{M,n}(x_1+iy_1)}{dy}=0$.
Next, we choose  $C$ as a large enough positive constant, independent of $M,n$ and let $x_2=-C$, such that  $\frac{d\Re f_{M,n}(x_2+iy)}{dx}<0$ for $t\in \Sigma^3_{+}$ and choose $y_2$ such that $\frac{d \Re f_{M,n}(x_2+iy_2)}{dy}=0$.
At last, we illustrate that $\Re f_{M,n}(t)$ increases as $t$ along the $(x_1,y_1)\to (x_2,y_2)$.
Note that $z=re^{i\theta}$, $r>0$ and  the angle $\theta \in (\pi/2,\pi)$ is fixed.

 By the first derivative 
\begin{equation}
	\begin{split}\frac{d\Re f_{M,n}(x_1+iy_1+z)}{dr}&=\Re\big(e^{i\theta}f'_{M,n}(x_1+iy_1+z)\big)\\
		&=\cos\theta \Re f'_{M,n}(x_1+iy_1+z)-\sin \theta \Im f'_{M,n}(x_1+iy_1+z),
	\end{split}
\end{equation}
and second derivative 
\begin{equation}\label{urr}
	\frac{d^2\Re f_{M,n}(x_1+iy_1+z)}{dr^2}=-\frac{1}{r}\frac{d\Re f_{M,n}(x_1+iy_1+z)}{dr}-\frac{1}{r^2}	\frac{d^2\Re f_{M,n}(x_1+iy_1+z)}{d\theta^2},
\end{equation}
we see that the signs of first-order and second-order derivatives are opposite. Combining \eqref{urr} and  $\frac{d\Re f_{M,n}(x_1+iy_1)}{dr}>0$ and $\frac{d\Re f_{M,n}(x_2+iy_2)}{dr}>0$, we can  conclude that $\frac{d\Re f_{M,n}(t)}{dr}>0$ for $t\in \Sigma^2_{+}$.
Similarly, we can prove that 
$\Re f_{M,n}(t)$ increases as $t$ moves upward on $\Sigma^1_{+}$, or t moves downward on $\Sigma^1_{-}$ and  $\Re f_{M,n}$ increases as $t$ moves leftward, and attains its minimum at $(x_2,\pm y_2)$ for $t\in \Sigma^3_{\pm}$.  It is easy to see that $\Re f_{M,n}(t)$ attains its maximun at $t=-n/\rho_{M,n}^{3/2}+(2\rho_{M,n}^{3/2})^{-1}$ for $t \in \Sigma^4$.
%Recalling that $q_0=z_0/\rho_{M,n}^{3/2}$, 
So  $\Re f_{M,n}(t) $ attains its minimum at left end of $\Sigma_{\text{local},\pm}$ on $\Sigma_{\text{global}}$.

 Makig the change of variables \begin{equation}
t=q_0+\rho_{M,n}^{-1/2}\tau, \quad s=q_0+\rho_{M,n}^{-1/2}\sigma,
 \end{equation}
 and taking Taylor expansion at $q_0$
 in $B(q_0,\rho_{M,n}^{-\frac{3}{2}\times \frac{3}{10}})$, we have
\begin{equation}
	f_{M,n}(t)=f_{M,n}(q_0)+\frac{1}{6}f_{M,n}'''(q_0)(t-q_0)^3+O(\rho_{M,n}^{-\frac{3}{2}\times \frac{6}{5}}),
\end{equation}
and 
\begin{align}\label{localestimate}
	&\big(\rho_{M,n}^{1/2}\big)e^{-\rho_{M,n}^{1/2}(\xi-\eta)q_0}\int_{\mathcal{C}_{\text{local}}}\frac{ds}{2\pi i}\int_{\Sigma_{\text{local}}}\frac{dt}{2 \pi i}e^{\rho_{M,n}^{1/2}(\xi t -\eta s)}\frac{1}{s-t}e^{ \rho_{M,n}^{3/2}\frac{f_{M,n}'''(q)}{6}\big((s-q)^3-(t-q)^3\big)+O(\rho_{M,n}^{-\frac{3}{10}})}\notag \\
	&=\int_{\mathcal{C}^{\rho_{M,n}^{1/20}}_{<}}\frac{d\sigma}{2\pi i} \int_{\Sigma_{>}^{\rho_{M,n}^{1/20}}}\frac{d \tau}{2\pi i }\frac{1}{\sigma-\tau}\frac{e^{\frac{\sigma^3}{3}-\eta \sigma}}{e^{\frac{\tau^3}{3}-\xi \tau}}\big(1+O(\rho_{M,n}^{-\frac{3}{10}})\big) \\ \notag
	&=K_{\mathtt{Ai}}(\xi,\eta)+O(\rho_{M,n}^{-\frac{3}{10}}).
\end{align}
%after chaneg the variables $\xi,\eta \to \frac{\xi}{(2q^2)^{1/3}},\frac{\eta}{(2q^2)^{1/3}}$, 
%thus we obtain the Airy kernel
%\begin{align}
%&\big(\frac{\rho_{M,n}^{3/2}}{2q^2}\big)^{1/3}e^{-\rho_{M,n}^{1/2}(\xi-\eta)\big(\frac{q}{2}\big)^{1/3}}\int_{\mathcal{C}_{\text{local}}}\frac{ds}{2\pi i}\int_{\Sigma_{\text{local}}}\frac{dt}{2 \pi i}\notag \\
%&=\int_{\mathcal{C}_{<}}\frac{d\sigma}{2\pi i} \int_{\Sigma_{>}}\frac{d \tau}{2\pi i }\frac{1}{\sigma-\tau}\frac{e^{\frac{\sigma^3}{3}-\eta \sigma}}{e^{\frac{\tau^3}{3}-\xi \tau}}\big(1+O(\rho_{M,n}^{-\frac{3}{10}})\big)\notag \\
%&
%\end{align}
Here, for real $R>0$ the contours are defined to be upwards and parameterized as 
\begin{align*}
	&\mathcal{C}_{<}^R=\{1+re^{\frac{4\pi i}{3}}\mid -R\leq r\leq 0\} \cup \{1+re^{\frac{5\pi i}{3}}\mid 0\leq r\leq R\},\\
	&\Sigma_{>}^R=\{-1+re^{\frac{2\pi i}{3}}\mid 0\leq r\leq R\} \cup \{-1+re^{\frac{\pi i}{3}}\mid -R\leq r\leq 0\}.
\end{align*}

On the other hand, we have the ``global" estimates of $\Re f_{M,n}(s)$ for $s\in \mathcal{C}_{\text{global}}$,
and  of $\Re f_{M,n}(t)$ for $t \in \Sigma_{\text{global}}$.
For $s$, we consider the infinite long contour $\mathcal{C}_{\text{global}}$ in two parts, one is $\mathcal{C}_{\text{global}}^1=\{s\in \mathcal{C}_{\text{global}} \mid |\Im s|\leq K\}$ and $\mathcal{C}_{\text{global}}^2=\mathcal{C}_{\text{global}}\setminus \mathcal{C}_{\text{global}}^1$, where $K$ is a large positive constant. 
\begin{lemma}\label{globalref}
	Let $x_0>q_0$, and  $C=\{x_0+iy\mid y\in \mathbb{R}\}\}$, then $\Re f_{M,n}(s)$ attains its global maximum at $x_0$ on $C$. Moreover, 
	\begin{equation}
		\frac{d \Re f_{M,n}(x_0+iy)}{dy}\begin{cases}
			<0, &y>0,\\
			>0,  & y<0.\\
		\end{cases}
	\end{equation}
\end{lemma}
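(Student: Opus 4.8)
The plan is to reduce everything to the sign of one real integral along the vertical line $C$ and then study that integrand. Since every shift $n+v_j$ $(0\le j\le M)$ and $m_j$ $(1\le j\le M)$ is a non‑negative integer and $m_0=0$, the factors $\Gamma(s+n+v_j)$, $\Gamma(s+m_j)$, $\Gamma(s)$ are holomorphic and non‑vanishing on $\{\Re s>0\}\supset C$, so $f_{M,n}$ is holomorphic there and one may differentiate inside the real part. Because $f'_{M,n}(x_0)\in\mathbb{R}$, one gets $\frac{d}{dy}\Re f_{M,n}(x_0+iy)=-\Im f'_{M,n}(x_0+iy)$, which vanishes at $y=0$, together with $\frac{d^2}{dy^2}\Re f_{M,n}(x_0+iy)=-\Re f''_{M,n}(x_0+iy)$; hence
\[
\frac{d}{dy}\Re f_{M,n}(x_0+iy)=-\int_0^{y}\Re f''_{M,n}(x_0+it)\,dt .
\]
Recalling $f''_{M,n}(s)=\rho_{M,n}^{3/2}\,H\!\big(\rho_{M,n}^{3/2}s\big)$ with $H(z):=\sum_{j=0}^M\big(\tfrac1{n+v_j+z}-\tfrac1{m_j+z}\big)$ --- the function whose unique positive zero is $z_0$ in \eqref{defz_0} --- and substituting $\tau=\rho_{M,n}^{3/2}t$, the whole lemma (the derivative sign, and with it the global maximum, since strict monotonicity in $|y|$ forces the maximum on $C$ to be at $x_0$) is equivalent to
\[
G(b):=\int_0^{b}\Re H(a_0+i\tau)\,d\tau>0\quad\text{for all }b>0,\qquad a_0:=\rho_{M,n}^{3/2}x_0>z_0 ,
\]
the range $y<0$ then following from $\Re H(\overline z)=\Re H(z)$.

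Next I would pin down the two endpoints of $G$. One has $G(0)=0$, and, since $G(b)=\Im\log R(a_0+ib)=\sum_{j=0}^{M}\big(\arctan\tfrac{b}{n+v_j+a_0}-\arctan\tfrac{b}{m_j+a_0}\big)$ with $R(z)=\prod_{j=0}^{M}\tfrac{z+n+v_j}{z+m_j}$, also $\lim_{b\to\infty}G(b)=(M+1)\tfrac\pi2-(M+1)\tfrac\pi2=0$; so $G$ vanishes at both ends of $[0,\infty]$. Moreover $G'(0)=H(a_0)>0$: indeed $H(0^+)=-\infty$ (the $j=0$ summand contributes $-1/z$), $z_0$ is the only positive zero of $H$, and at infinity $H(z)=z^{-2}\big(\sum_{j=1}^{M}m_j-\sum_{j=0}^{M}(n+v_j)\big)+O(z^{-3})$, where $\sum_{j=1}^{M}m_j\ge\sum_{j=0}^{M}(n+v_j)$ by the dimensional constraint \eqref{dcon}; hence $H<0$ on $(0,z_0)$ and $H>0$ on $(z_0,\infty)$, and $a_0>z_0$ gives $G'(0)=H(a_0)>0$.

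The remaining, and genuinely delicate, step is to upgrade ``$G'(0)>0$ and $G(0)=G(\infty)=0$'' to ``$G>0$ on $(0,\infty)$''. The clean route is to prove that $G'(b)=\Re H(a_0+ib)$ changes sign exactly once on $(0,\infty)$, necessarily from $+$ to $-$; then $G$ first increases and then decreases, so it stays positive between its two zeros. Writing
\[
\Re H(a_0+ib)=\sum_{j=0}^{M}(m_j-n-v_j)\,\frac{A_jB_j-b^2}{(A_jB_j-b^2)^2+(A_j+B_j)^2b^2},\qquad A_j:=n+v_j+a_0,\ B_j:=m_j+a_0 ,
\]
this single‑crossing property, regarding the right‑hand side as a rational function of $b^2$, is where I expect the main work to lie; I would establish it using the truncation ordering $m_j\ge n+v_{j-1}$ together with the regime $\Delta_{M,n}\to 0$ to control the relative sizes and signs of the summands. (On the contour $\mathcal{C}_{\text{global}}$ one in fact only needs this for $|b|\gtrsim\rho_{M,n}^{1/2}$; the complementary range $|y|\lesssim\rho_{M,n}^{-9/20}$ is handled directly by the cubic Taylor expansion $f_{M,n}(s)=f_{M,n}(q_0)+\tfrac16 f'''_{M,n}(q_0)(s-q_0)^3+\cdots$ near $q_0$, which, since $f'''_{M,n}(q_0)=2+o(1)>0$ and $x_0>q_0$, gives $\Re f_{M,n}(x_0+iy)=f_{M,n}(q_0)+\tfrac13(x_0-q_0)^3-(x_0-q_0)y^2+\cdots$, strictly decreasing in $|y|$ with the stated derivative sign.) Once the single sign change is secured, $G>0$ on $(0,\infty)$ follows, which yields $\frac{d}{dy}\Re f_{M,n}(x_0+iy)<0$ for $y>0$ and, by symmetry, $>0$ for $y<0$, and the global maximum at $x_0$ is immediate.
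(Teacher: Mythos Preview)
Your reduction to the positivity of $G(b)=\sum_{j=0}^{M}\bigl(\arctan\tfrac{b}{n+v_j+a_0}-\arctan\tfrac{b}{m_j+a_0}\bigr)$ for $b>0$ is correct, and this closed form for $-\tfrac{d}{dy}\Re f_{M,n}(x_0+iy)$ is precisely what the paper derives. From there, however, the paper does \emph{not} attempt your single-crossing argument: it linearizes each arctangent to write $G(b)=bH(a_0)+O(\Delta_{M,n})$ and concludes directly from $H(a_0)>0$ (your $G'(0)>0$) together with $\Delta_{M,n}\to 0$. The paper's route is thus shorter but correspondingly looser about uniformity in $y$; your framework, had the single-crossing been secured, would give the lemma cleanly for every $y$.

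The step you flag as delicate is a genuine gap, not a formality. In
\[
G'(b)=\sum_{j=0}^{M}(m_j-n-v_j)\,\frac{A_jB_j-b^2}{(A_j^2+b^2)(B_j^2+b^2)},
\]
each summand flips sign at its own threshold $b=\sqrt{A_jB_j}$, the $j=0$ coefficient has the opposite sign from the others, and neither the truncation inequalities (both $m_j\ge n+v_j$ and $m_j\ge n+v_{j-1}$ hold; the former is the one that governs the termwise signs here) nor the smallness of $\Delta_{M,n}$ forces these thresholds to combine into exactly one zero of the sum. The condition $\Delta_{M,n}\to 0$ bounds aggregates like $\sum 1/A_j$, $\sum 1/B_j$; it says nothing about the sign pattern of this rational function of $b^2$. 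Unless you can supply a concrete interlacing or variation-diminishing mechanism, this route remains incomplete. As you correctly note in your parenthetical, the application only needs the monotonicity on $\mathcal{C}_{\text{global}}$ for $|y|$ in a bounded range, with the cubic Taylor expansion near $q_0$ covering small $|y|$; that weaker statement is what the paper's linearization actually delivers, and is the target you should fall back to if the single-crossing resists proof.
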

By Lemma \ref{globalref}, $\Re f_{M,n}(s)$ decreases as $s$ moves upward on $\mathcal{C}_{\text{global}}^1 \cap \mathbb{C}_{+}$ or $s$ moves downward on $\mathcal{C}_{\text{global}}^1 \cap \mathbb{C}_{-}$.
We can also show by direct computation that $\Im f'_{M,n}(t)>1$ if $s\in \mathcal{C}_{\text{global}}^2 \cap \mathbb{C}_{+}$ and $\Im f'_{M,n}(t)<-1$ if $s\in \mathcal{C}_{\text{global}}^2 \cap \mathbb{C}_{-}$. Hence $f_{M,n}(s)$ decreases at least linearly fast as $s$ moves to $\pm i \infty$ along $\mathcal{C}_{\text{global}}^2$.
%\begin{proof}
%		\begin{align}
%			\frac{d \Re f_{M,n}(x_0+iy;)}{dy}&=-\Im \frac{d f_{M,n}(s)}{ds}|_{s=x_0+iy}\\ \notag
%			&=\sum_{j=0}^M \arctan\big(\frac{\rho_{M,n}^{3/2}y}{m_j+\rho_{M,n}^{3/2}x_0}\big)-\arctan\big(\frac{\rho_{M,n}^{3/2}y}{n+v_j+\rho_{M,n}^{3/2}x_0}\big)
%		\end{align}
%		\begin{align}
%		&\frac{d^2 \Re f_{M,n}(x_0+iy;)}{d^2y}=-\Re \frac{d^2 f_{M,n}(s)}{d^2s}|_{s=x_0+iy}\\\notag
%		&=\rho_{M,n}^{3/2}\sum_{j=0}^M \Big(\frac{m_j+\rho_{M,n}^{3/2}x_0}{(m_j+\rho_{M,n}^{3/2}x_0)^2+(\rho_{M,n}^{3/2}y)^2}-\frac{n+v_j+\rho_{M,n}^{3/2}x_0}{(n+v_j+\rho_{M,n}^{3/2}x_0)^2+(\rho_{M,n}^{3/2}y)^2}\Big)\\
%		&<0
%	\end{align}
%\end{proof}
For $t \in \Sigma_{\text{global}}$,  we know that $\Re f_{M,n}(t;\lambda_M)$ attains a minimum at the leftend of $\Sigma_{\text{local},\pm}$.
Combining the above arguments of Global estimates for $s\in \mathcal{C}_{\text{global}}$ and $t \in \Sigma_{\text{global}}$, there exists $\epsilon>0$ such that for $t\in \Sigma_{\text{global}}$ and $s \in \mathcal{C}_{\text{global}}$,
\begin{equation}\label{subglobales}
\Re \Big(f_{M,n}(s)-\frac{\xi(t-q_0)}{\rho_{M,n}}\Big)+\epsilon \rho_{M,n}^{-27/20}\\
	\leq \Re \Big(f_{M,n}(t)-\frac{\eta (t-q_0)}{\rho_{M,n}}\Big)-\epsilon \rho_{M,n}^{-27/20}.
\end{equation}
Hence combining the
estimates \eqref{subglobales} and  \eqref{localestimate} of $f_{M,n} (t; \lambda_M)$,  we have 
\begin{equation}\label{globalestimate}
	\begin{split}
	&	\rho_{M,n}^{-1}e^{-(\xi-\eta)\frac{z_0}{\rho_{M,n}}}\int\int_{\mathcal{C}\times\Sigma\setminus \mathcal{C}_{\text{local}}\times \Sigma_{\text{local}}}\frac{ds}{2\pi i}\frac{dt}{2\pi i}\frac{1}{s-t}e^{\rho_{M,n}^{1/2}(\xi t-\eta s)}e^{\rho_{M,n}^{3/2}(f_{M,n}(s)-f_{M,n}(t))}\\
	&=\rho_{M,n}^{-1}\int\int_{\mathcal{C}\times\Sigma\setminus \mathcal{C}_{\text{local}}\times \Sigma_{\text{local}}}\frac{ds}{2\pi i}\frac{dt}{2\pi i}\frac{1}{s-t}
	\frac{e^{\rho_{M,n}^{3/2}\Big(f_{M,n}(s)-\frac{\eta(s-q_0)}{\rho_{M,n}}\Big)}}{e^{\rho_{M,n}^{3/2}\Big(f_{M,n}(t)-\frac{\xi(t-q_0)}{\rho_{M,n}}\Big)}}\\
	&=O\big(e^{-\epsilon(\rho_{M,n}^{-3/20})}\big).
	\end{split}
\end{equation}

Combining the ``local" estimate \eqref{localestimate} and ``global" estimate \eqref{globalestimate}, we thus complete the proof of  Theorem \ref{subthm}.
\end{proof}
	
	\begin{proof}[Proof of Lemma \ref{globalref}]
		We analyze the derivative of the real part of $f_{M,n}$ along the imaginary axis
		\begin{align}
			\frac{d}{dy}\Re f_{M,n}(x_0 + iy) 
			&= -\Im \left. \frac{df_{M,n}(z)}{dz} \right|_{z=x_0+iy} \notag\\
			&= \Im \bigg( \log z + \sum_{j=1}^M \log\Big(1+\frac{\rho_{M,n}^{3/2}z}{m_j}\Big) 
			- \sum_{j=0}^M \log\Big(1+\frac{\rho_{M,n}^{3/2}z}{n+v_j}\Big) \bigg) \bigg|_{z=x_0+iy} \notag\\
			&= \sum_{j=0}^M \left( \arctan\left(\frac{\rho_{M,n}^{3/2}y}{m_j + \rho_{M,n}^{3/2}x_0}\right) 
			- \arctan\left(\frac{\rho_{M,n}^{3/2}y}{n+v_j + \rho_{M,n}^{3/2}x_0}\right) \right)  \notag\\
			&= -\rho_{M,n}^{3/2}y \sum_{j=0}^M \left( \frac{1}{n+v_j + \rho_{M,n}^{3/2}x_0} 
			- \frac{1}{m_j + \rho_{M,n}^{3/2}x_0} \right) + O(\Delta_{M,n})
		\end{align}
		The key observation comes when $\rho_{M,n}^{3/2}x_0 > z_0$, where $z_0$ is the unique positive solution to equation \eqref{defz_0}, we have
	\begin{equation}
				\sum_{j=0}^M \left( \frac{1}{n+v_j + \rho_{M,n}^{3/2}x_0} - \frac{1}{m_j + \rho_{M,n}^{3/2}x_0} \right) > 0.
	\end{equation}
	This inequality implies $\frac{d}{dy}\Re f_{M,n}(x_0+iy) < 0$ for $y > 0$
and $\frac{d}{dy}\Re f_{M,n}(x_0+iy) > 0$ for $y < 0$.

		Consequently, $\Re f_{M,n}(s)$ attains its global maximum at $x_0$ on the contour $C$.
	\end{proof}

\section{Bulk statistics in Low DWR}\label{sect3}
%\text{\bf{Subcritical case}}
In this section, we investigate a simplified version of the  {\bf{Truncated  product model}} under \eqref{assumptionss}. This  enables an exact  formulation of the limiting density of squared singular values for $Y_M$, expressed through the following  parameterization.

%%{\bf{Assumption:}}\begin{equation}\label{assumptionss}
%	\frac{m_j}{n} \to 1+\frac{1}{a},\quad \frac{v_j}{n}\to 0,\quad \text{for $j=1,2,\dots,M$},
%\end{equation} where $0<a<M$ .

%It is the aim of this section to confirm the bulk and soft edge universality in the products of truncated unitary matrices $Y_M$. The first step of the study of the correlation kernel is to compute the 1-point correlation function, which is also known as the mean density of the model. %This global result is also the basis of our proof of the local universality.
Let $G(z)$ be the Stielties transform of the probability measure of the squared singular values of $Y_M$, which satisfies the algebraic equation \begin{equation}
	z(zG(z)-1)\Big(zG(z)+\frac{1}{a}\Big)^M=(zG(z))^{M+1}.
\end{equation}
By making change of variable $W(z)=zG(z)$, we obtain 
\begin{equation}	W^{M+1}=(W-1)\Big(W+\frac{1}{a}\Big)^M z,\end{equation}
Let $\frac{(a+1)W}{aW+1}=V$, then
\begin{equation}\label{aglequ}	V^{M+1}=(a+1)^{M+1}a^{-M}(V-1)z.\end{equation}
According to the parameterization of denstiy function of  Fuss-Catalan distribution\cite{Ne14}, we set
\begin{equation}\label{x_para}	x=x(\theta)=\frac{a^M(\sin((M+1)\theta))^{M+1}}{(a+1)^{M+1}\sin(\theta)(\sin(M\theta))^M}, \quad 0<\theta<\frac{\pi}{M+1},
\end{equation}
then \begin{equation}	V_{+}(x)=\frac{\sin((M+1)\theta)}{\sin(M\theta)}e^{i\theta},\quad	V_{-}(x)=\frac{\sin((M+1)\theta)}{\sin(M\theta)}e^{-i\theta}
\end{equation}
are two solutions of \eqref{aglequ}.
So we  get the formula of the density function  %$\rho(\theta)$ in terms of $\theta$
\begin{align}
\label{den_para}	\rho(\theta)&=\frac{1}{2\pi i x}(W_{+}(x)-W_{-}(x))
		=\frac{1}{\pi x} \Im\big(\frac{V_+(x)}{a+1-aV_+(x)}\big)\\
	&=\frac{(a+1)^{M+2}\sin^{M+1}(M\theta)\sin^2(\theta)}{\pi a^M\sin^{M}((M+1)\theta)\big((a+1)^2\sin^2(M\theta)\sin^2(\theta)+(a\cos(M\theta)\sin(\theta)-\sin(M\theta)\cos(\theta))^2\big)}\notag,
\end{align}
supported on $\big[0,\frac{(M+1)^{M+1}a^M}{(a+1)^{M+1}M^{M}}\big]$,
where \begin{equation}
	W_{\pm}(x)=\frac{V_{\pm}(x)}{a+1-aV_{\pm}(x)}=\frac{\sin((M+1)\theta)e^{\pm i\theta}}{(a+1)\sin (M\theta)-a\sin((M+1)\theta)e^{\pm i\theta}}.
\end{equation}
%So the density function $\rho$ in terms of $\theta$ is given by 
In the case $a=1,M=1$, the density reduce to the well-known arcsine measure. %When $a=1$, the density can be derived using the $M$-fold free multiplicative convolution of the arcsine measure \cite{NT20}.

 %For the local statistics results of singular values of products of Ginibre matrices, we suggest referring to \cite{LWZ16}. Our proof also relies on the saddle point method developed by \cite{LWZ16}.

%Under the {\bf{Assumption}}, all parameters in theorem \ref{subthm} are assigned  accurate values, we present a refined version of Theorem \ref{subthm}.

 When $0<a<M$,
the squared singular values of $Y_M$ are a determiantal point process\cite{CKW15} with correlation kernel 
\begin{equation}\label{k_n}
	\widetilde{K}_n(x,y)=\frac{1}{(2 \pi i)^2}\int_{\mathcal{C}}ds \oint_{\Sigma}dt \prod_{j=0}^{M}\frac{\Gamma(s+1+v_j)}{\Gamma(t+1+v_j)}\frac{\Gamma(t+1-n+m_j)}{\Gamma(s+1-n+m_j)}\frac{x^ty^{-s-1}}{s-t}.
\end{equation}
Using the argument of parametrization, we can establish bulk universality in Low DWR regime stated in Theorem \ref{TBU} $\mathtt{(ii)}$.
And the following theorem is an exact expression of Theorem \ref{subthm} $\mathtt{(i)}$ for the kernel $\widetilde{K}_n$.
\begin{theorem}\label{TSEU}{\bf(GUE edge statistics)}
 Under \eqref{assumptionss}, suppose that $\lim\limits_{M+n \to \infty} \Delta_{M,n} = 0$.
Let
\begin{equation}\label{defxright}
	x_{*}=\frac{(M+1)^{M+1}a^M}{(a+1)^{M+1}M^M},\quad c_2=2^{-\frac{1}{3}}\frac{a^M(M+1)^{M+2/3}(M-a)^{4/3}}{(a+1)^{M+5/3}M^{M+1/3}},
\end{equation} 
then uniformly for $\xi$ and $\eta$ in any compact subset of $\mathbb{R}$, we have
	\begin{equation}
		\lim_{n \to \infty}e^{n^{1/3}(\eta-\xi)\frac{M+1}{M-a}\frac{c_2}{x_*}}n^{-2/3}c_2\widetilde{K}_n\big(x_{*}+\frac{c_2\xi}{n^{2/3}},x_{*}+\frac{c_2\eta}{n^{2/3}}\big)=K_{\mathtt{Ai}}(\xi,\eta).
	\end{equation}
 
\end{theorem}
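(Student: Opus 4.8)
The plan is to rerun the steepest-descent scheme from the proof of Theorem \ref{subthm} $\mathtt{(i)}$, now applied directly to the double contour integral \eqref{k_n} for $\widetilde{K}_n$, and to read off the explicit edge $x_{*}$ and window $c_2$ from the parameterization \eqref{x_para}--\eqref{den_para} of Section \ref{sect3}. I substitute $x=x_{*}+c_2\xi n^{-2/3}$, $y=x_{*}+c_2\eta n^{-2/3}$ and apply Stirling's formula \eqref{eslogg} to \eqref{k_n}; under \eqref{assumptionss} the $v_j$'s and the deviations of $m_j/n$ from $1+1/a$ enter only through the $O(\Delta_{M,n})=o(n)$ correction, so the integrand becomes $e^{\,n(\Phi(s/n)-\Phi(t/n))}$ times $1/(s-t)$ and a bounded prefactor, with an effective potential $\Phi$ depending on $(M,a)$ alone. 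The saddle equation $\Phi'(\zeta)=0$ is, after the change of variable $\zeta\mapsto V$, exactly the algebraic equation \eqref{aglequ}, and its two branches $V_{\pm}(x)$ coalesce precisely as $\theta\to 0^{+}$, i.e. $x\to x_{*}$ and $V_{\pm}\to(M+1)/M$; hence at $x=x_{*}$ the corresponding saddle $\zeta_{*}$ is a \emph{double} zero of $\Phi'$ with $\Phi'''(\zeta_{*})\neq 0$, and $\zeta_{*}$ equals the common limit $W_{\pm}(x_{*})=\tfrac{M+1}{M-a}$ of the functions in \eqref{den_para} as $\theta\to 0$. Expanding \eqref{x_para} to second order in $\theta$ and transporting this through the parametric relation gives $\Phi'''(\zeta_{*})$, hence the $n^{-2/3}$ scale and the closed form of $c_2$ in \eqref{defxright}; the value $\zeta_{*}=\tfrac{M+1}{M-a}$ produces the $O(n^{1/3})$ linear term $\zeta_{*}\tfrac{c_2}{x_{*}}\,\xi\,n^{1/3}$ that is exactly cancelled by the conjugating factor $e^{\,n^{1/3}(\eta-\xi)\frac{M+1}{M-a}\frac{c_2}{x_{*}}}$, while the remaining $c_2\xi n^{-2/3}$, $c_2\eta n^{-2/3}$ perturbations of $x,y$ pass into the linear shifts $-\xi\tau$, $-\eta\sigma$ of the Airy integral.

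With the saddle located, I would deform $\mathcal{C}$ (for $s$) and $\Sigma$ (for $t$) so that near $s,t=\zeta_{*}n$ they run along the relevant steepest-descent rays, matching the Airy contours $\mathcal{C}_{<}^{R}$, $\Sigma_{>}^{R}$ after the local rescaling $s=\zeta_{*}n+\varrho\,\sigma\,n^{1/3}$, $t=\zeta_{*}n+\varrho\,\tau\,n^{1/3}$ with $\varrho$ normalizing $\Phi'''(\zeta_{*})$, and so that the global parts consist of straight segments plus a short arc near the hard edge at $0$, on which $\Re\Phi$ attains a strict maximum (for $s$) and a strict minimum (for $t$) only at $\zeta_{*}n$. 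This step goes exactly as in Section \ref{sect.2}: on horizontal lines $\Re\Phi(x+iy)$ is convex in $x$ because $\Phi''$ is a signed sum of $\psi'$'s, and $\tfrac{d}{dy}\Re\Phi(x+iy)$ is a signed sum of arctangents, which forces monotonicity as $s$ or $t$ moves off the saddle (compare Lemmas \ref{globalref}, \ref{fsglobal}, \ref{supergollemma}); the arc near $0$ is controlled by a crude Stirling bound. On the local pieces, Taylor-expanding $\Phi$ to third order collapses the contribution to $K_{\mathtt{Ai}}(\xi,\eta)$ by the computation \eqref{localestimate}, the global contribution is exponentially small in a positive power of $n$ by the monotonicity estimates, and adding the two gives the claimed limit, uniformly for $\xi,\eta$ in compacts.

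The main obstacle is twofold. First, the explicit constant: one must verify that the double root of the saddle equation sits exactly at $\theta=0$, $x=x_{*}$, and then push the expansion of \eqref{x_para} far enough to extract $\Phi'''(\zeta_{*})$ and recover the closed form of $c_2$ in \eqref{defxright}---elementary but delicate, since $x(\theta)-x_{*}$ vanishes to second order in $\theta$ and the needed $\zeta$-derivatives are obtained only by inverting the parametric relation. Second, the global contour analysis is heavier than in the pure soft-edge case of Theorem \ref{subthm} $\mathtt{(i)}$ because the squared singular values also have a \emph{hard} edge at $0$: the $t$-contour $\Sigma$ must keep enclosing all poles $\{0,1,\dots,n-1\}$ of the Pochhammer-type factor $\Gamma(s+1)\Gamma(t+1-n)/(\Gamma(t+1)\Gamma(s+1-n))$ in \eqref{k_n}, so one has to check that routing $\Sigma$ through the distant saddle $\zeta_{*}n$ preserves the winding number and keeps $\Re\Phi$ monotone all the way around, with a separate estimate on the arc near $0$. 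A minor point is uniformity in the subleading parts of $v_j,m_j$ left unspecified by \eqref{assumptionss}: these perturb $\Phi$ only at order $o(n)$ and therefore do not move $\zeta_{*}$, which is precisely why the limiting constants $x_{*}$ and $c_2$ depend on $M$ and $a$ only. Alternatively, one may deduce Theorem \ref{TSEU} from Theorem \ref{subthm} $\mathtt{(i)}$ through the substitution $\lambda=e^{x}$, which conjugates \eqref{logkn} into \eqref{k_n} up to a contour shift, provided the asymptotics $\rho_{M,n}^{-1}\sim(c_2/x_{*})\,n^{-2/3}$ and $z_0/\rho_{M,n}\sim\tfrac{M+1}{M-a}\tfrac{c_2}{x_{*}}\,n^{1/3}$ under \eqref{assumptionss} are checked via the same parameterization.
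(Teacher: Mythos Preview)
Your proposal is correct and follows essentially the same steepest-descent route as the paper's sketch: rewrite $\widetilde{K}_n$ via Stirling as a double integral governed by the potential $\hat{F}$ of \eqref{hatF}, identify the coalesced saddle $\zeta_{*}=\tfrac{M+1}{M-a}$ at $x=x_{*}$, expand to third order, and match to the Airy integral after the $n^{1/3}$-conjugation.

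The only noteworthy difference is how the third derivative and hence $c_2$ are extracted. You propose to obtain $\Phi'''(\zeta_{*})$ by expanding the parametrization \eqref{x_para} in $\theta$ and inverting the relation $\theta\mapsto\zeta$, whereas the paper simply evaluates $\hat{F}_{zzz}(z;x_{*})=\tfrac{M+1}{z}-\tfrac{1}{z-1}-\tfrac{M}{z+1/a}$ (differentiated once more) directly at $z_0=\tfrac{M+1}{M-a}$ to get the closed form $\hat{F}_{zzz}(z_0;x_{*})=\tfrac{(M-a)^4}{M(M+1)(a+1)^2}$; this immediately fixes $c_1=x_{*}/c_2$ via $\tfrac{1}{6}\hat{F}_{zzz}(z_0;x_{*})\,c_1^{3}=\tfrac{1}{3}$ and is less delicate than your parametric inversion. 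Your discussion of the global contour---in particular the need for $\Sigma$ to keep enclosing $\{0,\dots,n-1\}$ while passing through $n\zeta_{*}$, and the separate estimate near the hard edge at $0$---is more explicit than the paper's sketch, which simply declares the remaining part negligible and refers back to the machinery of Theorem \ref{subthm} $\mathtt{(i)}$. The alternative you mention at the end (deducing Theorem \ref{TSEU} from Theorem \ref{subthm} $\mathtt{(i)}$ via $\lambda=e^{x}$) is not the route the paper takes, but it is a legitimate shortcut once the asymptotics of $\rho_{M,n}$ and $z_0$ under \eqref{assumptionss} are verified.
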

In the following,  we provide proofs for the bulk and edge limit theorems in a unified treatment through the kernel $\widetilde{K}_n$.
%In the setting all $a_j=a$,$a\leq M$,firstly we have the resolvent equation,
%\begin{equation}	z(zG(z)-1)(zG(z)+\frac{1}{a})^M=(zG(z))^{M+1}.
%\end{equation}

%With $v_j/n \to 0$, for all $j$ and $a<M$, we have bulk universality and soft edge universality.

\subsection{Proof of Theorem \ref{TBU}  $\mathtt{(ii)}$}
Although \cite{LWZ16} addresses finite products and \cite{LWW23}  deals with  infinite products, our proof technique extends the framework of \cite{LWZ16} to both cases in the bulk. We outline the proof strategy by partitioning $\Sigma$ into curved and vertical components, denoted respectively as $\Sigma_{curved}$ and $\Sigma_{vertical}$. The kernel decomposes as $\widetilde{K}_n(x,y) = I_1 + I_2$, where:
\begin{itemize}
	\item $I_1$, which is defined over $\mathcal{C} \times \Sigma_{curved}$, is evaluated via saddle point analysis and shown to be negligible;
	\item $I_2$, which is defined over ${\mathcal{C} \times \Sigma_{vertical}}$, is computed through Cauchy's theorem and constitutes the principal distribution.
\end{itemize}

Let us first introduce some  notations. 
The integral contour for $s$ denoted by $\mathcal{C}$ is the vertical, upward contour through the points $nW_+$ and $nW_-$.
  The contour for $t$ denoted by $\Sigma$ is defined as 
\begin{equation}\Sigma=\Sigma_{curved}\cup \Sigma_{vertical},
\end{equation}
where
\begin{equation}\Sigma_{curved}=\Sigma_1 \cup \Sigma_2,\quad \Sigma_{vertical}=\Sigma_3 \cup \Sigma_4,
\end{equation}
and  \begin{equation}
	\begin{split}
&\Sigma_1=n\tilde{\Sigma}^{r}\cap \{z \mid \Re z \leq \Re nW_{\pm}-\epsilon\},\\
&\Sigma_2=n\tilde{\Sigma}^{r}\cap \{z \mid \Re z \geq \Re nW_{\pm}+\epsilon\},\\
&\Sigma_3={\text{vertical bar connecting the two ending points of $\Sigma_1$}},\\
&\Sigma_4={\text{vertical bar connecting the two ending points of $\Sigma_2$}},
\end{split}
\end{equation}
where $\epsilon$ and $\delta$ are two small enough positive constant and we take $r=\frac{1}{n}\big([\delta n]+\frac{1}{2}\big)$.
For the construction of $\tilde{\Sigma}^r$, we first define $\tilde{\Sigma}:=\tilde{\Sigma}_+ \cup \tilde{\Sigma}_-$,
where
\begin{equation}
\tilde{\Sigma}_+ :=\{\zeta(\theta) \mid \theta \in [0,\frac{\pi}{M+1}]\},\quad \tilde{\Sigma}_+ :=\{\overline{\zeta(\theta)} \mid \theta \in [0,\frac{\pi}{M+1}]\},
\end{equation}
with
\begin{equation}
	\zeta(\theta)=\frac{\sin((M+1)\theta)e^{i\theta}}{(a+1)\sin (M\theta)-a\sin((M+1)\theta)e^{ i\theta}}.
\end{equation}
It is easy to check that $\tilde{\Sigma}_{\pm}$ lies in $\mathbb{C}_{\pm}$, pass through $nW_{\pm}$ and intersects the real line only at $0$ when $\theta= \frac{\pi}{M+1}$ and at $\frac{M+1}{M-a}$ when $\theta=0$. Furthermore, as $\theta$ runs from $0$ to $\frac{\pi}{M+1}$, the value of $|\zeta(\theta)|$ decreases.
And we need to make a small deformation of $\tilde{\Sigma}$ around the origin, which is given by % $\tilde{\Sigma}^r$
\begin{equation*}
	\tilde{\Sigma}^r:=\Big\{z \in \tilde{\Sigma} \mid |z|\geq r \Big\}\cup \Big\{\text{the arc of $\{|z|=r\}$ connecting $\tilde{\Sigma}\cap \{|z|=r\}$ \text{and through $-r$}} \Big\}.
\end{equation*}
\begin{proof}[Proof of  Theorem \ref{TBU} $\mathtt{(ii)}$]
For asymptotics  of $\tilde{K}_n(x,y)$, we rewrite \eqref{k_n} as
\begin{equation}
\tilde{K}_n(x,y)=I_1+I_2,
\end{equation}
where
 \begin{equation}
	I_1=\frac{1}{y(2\pi i )^2} \int_{\mathcal{C}}\oint_{\Sigma_{curved}} \frac{e^{F(s;y)}}{e^{F(t;x)}}\frac{1}{s-t}dsdt,
\end{equation} 
and  
\begin{equation}
	I_2=\frac{1}{y(2\pi i )^2} \int_{\mathcal{C}}\oint_{\Sigma_{vertical}} \frac{e^{F(s;y)}}{e^{F(t;x)}}\frac{1}{s-t}dsdt,
	\end{equation}
where
\begin{equation}
	F(z;x)=t\log x+\sum_{j=0}^M\log \Gamma(z+1+v_j)-\log \Gamma(z+1-n+m_j).
\end{equation}
Using the Stirling's formula for gamma function, we have
 \begin{equation}\label{FBS}
	F(z;x)
	=n\hat{F}(z/n;x)+(1-M/a)\log n +c_{M,n}+O(\Delta_{M,n}),
\end{equation}
where \begin{equation}\label{hatF}
	\begin{split}	\hat{F}(z;x)&=(M+1)z\big(\log z-1\big)-(z-1)\big(\log(z-1)-1\big)\\
		&	-M(z+\frac{1}{a})\big(\log(z+\frac{1}{a})-1\big)-z\log x,
\end{split}	\end{equation}
and 
\begin{equation}\label{C-mn}
	c_{M,n}=\sum_{j=0}^M\Big(v_j+\frac{1}{2}\Big)\log z-\frac{1}{2}\log (z-n)-\sum_{j=1}^M\Big(v_j+\frac{1}{2}\Big)\log\Big(z+\frac{n}{a}\Big).
\end{equation}
For $x_0\in \big(0,\frac{(M+1)^{M+1}a^M}{(a+1)^{M+1}M^M}\big)$, we use the paramentrization shown in \eqref{x_para}, and let $\theta$ be the unique real number in $(0,\frac{\pi}{M+1})$
such that \eqref{x_para} and \eqref{den_para} hold.
Let
\begin{equation}
x=x_0+\frac{\xi}{n\rho(\theta)},\quad 
y=x_0+\frac{\eta}{n\rho(\theta)}.
\end{equation}
we known that $W_{\pm}$ are two saddle points of $\hat{F}(z;x_0)$, so
\begin{equation}
	\hat{F}_z(W_{\pm};x_0)=0,
\end{equation}
Straightforward calculation gives us
\begin{equation}
	\hat{F}_{zz}(W_{\pm};x_0)=\frac{(M-a)W_{\pm}-(M+1)}{W_{\pm}(W_{\pm}-1)(aW_{\pm}+1)}.
\end{equation}
Note that the contour $\Sigma_{vertical}$ depends on a parameter $\epsilon$. Applying the residue theorem, we evaluate $I_2$ as $\epsilon \to 0$, 
\begin{equation}
\begin{split}
	\lim_{\epsilon \to 0}I_2
	&=\frac{1}{y(2\pi i)^2}\int_{\mathcal{C}}ds\int_{\Sigma_{vertical}}dt  \frac{e^{F(s;y)}}{e^{F(t;x)}}\frac{1}{s-t}\\
	&=\frac{1}{y2\pi i}\int_{nW_+}^{nW_-}\frac{e^{F(s;y)}}{e^{F(s;x)}}ds\\
	&=\frac{1}{y2\pi i}\int_{nW_+}^{nW_-}\big(\frac{x}{y}\big)^s ds\\
	&=\frac{\big(y\log (x/y)\big)^{-1}}{2\pi i}\big((x/y)^{nW_+}-(x/y)^{nW_-}\big).
\end{split}
\end{equation}
%in which the contour $\Sigma$ depends on a parameter $\epsilon$.
 Here assume that $x\neq y$ and $n$ lagre enough, we have %$\frac{1}{n\rho(\theta)x_0}=O\big(\frac{M+a}{n(a+1)}\big)=O(\Delta_{M,n})$, 
 so
\begin{equation}
	\frac{x}{y}=\frac{x_0+\frac{\xi}{n\rho(\theta)}}{x_0+\frac{\eta}{n\rho(\theta)}}=1+\frac{\xi-\eta}{n\rho(\theta)x_0}+O\Big(\frac{1}{\big(n\rho(\theta)x_0\big)^2}\Big),
\end{equation}
and 
\begin{equation}
	y\log(\frac{x}{y})=\big(x_0+\frac{\eta}{n\rho(\theta)}\big)\log\big(1+\frac{\xi-\eta}{n\rho(\theta)x_0}+O\big(\frac{1}{(n\rho(\theta)x_0)^2}\big)\Big)\big)
	=\frac{\xi-\eta}{n\rho(\theta)}\Big(1+O\big(\frac{1}{n\rho(\theta)x_0}\big)\Big).
\end{equation}
Combining the above approximations, if $\xi\neq \eta $, we obtain
\begin{equation}\label{I2sine}
\begin{split}	
	\lim_{\epsilon \to 0}I_2	&=\frac{\Big(\frac{\xi-\eta}{n\rho(\theta)}\big(1+O(\frac{1}{(n\rho(\theta)x_0)})\big)\Big)^{-1}}{2\pi i}\big(e^{\frac{(\xi-\eta) W_+}{\rho(\theta) x_0}}-e^{\frac{(\xi-\eta) W_-}{\rho(\theta) x_0}}\big)\big(1+O\big(\frac{1}{n\rho(\theta)x_0}\big)\big)\\ 
		&=\frac{n\rho(\theta)}{2\pi(\xi-\eta)}\big(e^{\frac{(\xi-\eta) W_+}{\rho(\theta) x_0}}-e^{\frac{(\xi-\eta) W_-}{\rho(\theta) x_0}}\big)+O\big(\frac{1}{n\rho(\theta)x_0}\big)\\ 
	    &=\frac{n\rho(\theta)\sin \big(\pi(\xi-\eta) \big)}{\pi(\xi-\eta)}e^{\pi \frac{\Re W_+}{\Im W_+}(\xi-\eta)+O\big(\frac{1}{n\rho(\theta)x_0}\big)}.
\end{split}
\end{equation}
%for lagre $n$.
 By analytic continuation argument we know that  \eqref{I2sine} also holds for $\xi = \eta$.

Later, we are going to estimate $I_1$ as $\epsilon \to 0$,
\begin{equation}\label{limitI_1}
\begin{split}
		\lim_{\epsilon \to 0}I_1
		&=\lim_{\epsilon \to 0}\frac{1}{y(2\pi i)^2}\int_{\mathcal{C}} \int_{\Sigma_{curved}} \frac{e^{F(s;y)}}{e^{F(t;x)}}\frac{1}{s-t}dsdt\\
		&=\frac{1}{y(2\pi i)^2}\lim_{\epsilon \to 0}\int_{n\tilde{\Sigma}^r \setminus (D_{\epsilon}(nW_{\pm}))}\big(\int_{\mathcal{C}}ds\frac{e^{F(s;y)}}{e^{F(t;x)}}\frac{1}{s-t}\big)dt\\
		&=\frac{1}{y(2\pi i)^2}\text{P.V.}\int_{n\tilde{\Sigma}^r }dt\int_{\mathcal{C}}ds\frac{e^{F(s;y)}}{e^{F(t;x)}}\frac{1}{s-t}
\end{split}
\end{equation}
where P.V. means the Cauchy principle value. 

Next, we will show that the main contribution to the Cauchy principle integral is from the local domain in the sense that remaining part of the integral is negligible in the asymptotic analysis.
To estimate \eqref{limitI_1} the limit of $I_1$, by setting $r_0:=\big(n\rho(\theta)\big)^{\frac{3}{5}}x_0$ we define 
\begin{equation}
	\mathcal{C}_{\text{local}}^{\pm}=\mathcal{C}\cap D_{r_0}(nW_{\pm}),\quad \Sigma_{local}^{\pm}=\Sigma \cap D_{r_0}(nW_{\pm}),
\end{equation}
where $D_r(a)$ the disc centered
at $a$ with radius $r$. 
For $s \in \mathcal{C}_{local}^{+}$ and $t \in \Sigma_{local}^{+}$, making the change of variables
\begin{equation}
	s=nW_{+}+\sqrt{n\rho(\theta)}x_0u, \quad	t=nW_{+}+\sqrt{n\rho(\theta)}x_0v,
\end{equation}
uniformly for $s \in D_{r_0}(nW_+)$, we have
\begin{align}\label{BULKeFs}
e^{F(s;y)}
&=e^{F(s;x_0)}\big(1+\frac{\eta}{n\rho(\theta)x_0}\big)^{-s} \notag\\
&=n^{(1-M/a)n}e^{\tilde{c}_M+n\hat{F}(W_{+}+\sqrt{\rho(\theta)/n}x_0u;x_0)}\big(1+\frac{\eta}{n\rho(\theta)x_0}\big)^{-s}\big(1+O\big(\big(n\rho(\theta)x_0\big)^{-\frac{1}{2}}\big)\big)\notag\\
&=n^{(1-M/a)n}e^{\tilde{c}_M+n\hat{F}(W_+;x_0)}e^{\frac{\rho(\theta)x_0}{2}\hat{F}_{zz}(W_+;x_0)u^2-\frac{W_+ \eta}{x_0\rho(\theta)}}\big(1+O\big(\big(n\rho(\theta)x_0\big)^{-\frac{1}{5}}\big)\big),
\end{align}
where \begin{equation}
	\tilde{c}_M= \sum_{j=1}^M(v_j+\frac{1}{2})\log\Big(\frac{W_+}{W_+ +1/a}\Big)+\frac{1}{2}\log\Big(\frac{W_+}{W_+ -1}\Big).
\end{equation}
A parallel argument yields that uniformly for $t\in D_{r_0}(nW_{+})$,
\begin{equation}\label{BULKeFt}
	e^{F(t;x)}=n^{(1-M/a)n}e^{\tilde{c}_M+n\hat{F}(W_+;x_0)}e^{\frac{\rho(\theta)x_0}{2}\hat{F}_{zz}(W_+;x_0)v^2-\frac{W_+ \xi}{x_0\rho(\theta)}}\big(1+O\big(\big(n\rho(\theta)x_0\big)^{-\frac{1}{5}}\big)\big).
\end{equation}
 Combining \eqref{BULKeFs} and \eqref{BULKeFt}, we have %$s-t=n^{1/2}(u-v)$,
\begin{align}\label{es.p.v.local}
 &\text{P.V.} \int_{\mathcal{C}_{local}^{+}}ds \oint_{\Sigma_{local}^{+}}dt\frac{e^{F(s;y)}}{e^{F(t;x)}}\frac{1}{s-t} \notag\\
 &=\frac{1}{\sqrt{n\rho(\theta)}x_0}e^{-\frac{W_+(\xi-\eta)}{x_0\rho(\theta)}}\text{P.V.}\int_{\mathcal{C}_{local}^{+}}ds\oint_{\Sigma_{local}^{+}}dt\frac{e^{\frac{\rho(\theta)x_0}{2}\hat{F}_{zz}(W_+;x_0)(u^2-v^2)}}{u-v}\big(1+O\big(\big(n\rho(\theta)x_0\big)^{-\frac{1}{5}}\big)\big),
\end{align}
where on the right-hand side, we understand $u$ and $v$ as functions of $s$ and $t$, respectively. Note that the $O\big(\big(n\rho(\theta)x_0\big)^{-1/5}\big)$ term is uniform and analytic in $D_{r_0}(nW_+)$.
Next, the fllowing Lemmas about properties of $\hat{F}(z;x_0)$ on the contours will play a important role in our later analysis, whose proofs are left in the end of this
subsection.
\begin{lemma}\label{sublem1}
	For any $x_0 \in (0,x_*)$, which can be parameterized by $\theta_0 =\theta(x_0)$ and $W_+=W_+(x_0)$, there exist constans $\epsilon$ and $\delta$ such that
	\begin{equation}\label{sublem1ineq}
		\Re \hat{F}(z;x_0)\geq \Re \hat{F}(W_{\pm};x_0)+\epsilon|z-W_{\pm}| ,\quad z \in \tilde{\Sigma}^{\epsilon}\cap D_{\delta}(W_{\pm}).
	\end{equation} 
	Moreover, we have 
	\begin{equation}
		\frac{d\Re \hat{F}(\zeta(\theta);x_0)}{d \theta}
		\begin{cases}
			<0, &\text{for}\quad \theta\in (0,\theta_0),\\
			>0,  &\text{for}\quad \theta\in (\theta_0,\frac{\pi}{M+1}),\\
		\end{cases}
	\end{equation}
	\begin{equation}
		\frac{d\Re \hat{F}(\overline{\zeta(\theta)};x_0)}{d \theta}
		\begin{cases}
			<0, &\text{for}\quad \theta\in (0,\theta_0),\\
			>0,  &\text{for}\quad \theta\in (\theta_0,\frac{\pi}{M+1}).\\
		\end{cases}
	\end{equation}
\end{lemma}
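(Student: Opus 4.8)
The plan is to reduce the statement to a single clean formula for the $\theta$-derivative of $\Re\hat F$ along the curve, and then to the signs of two scalar quantities. Since $\hat F(\bar z;x_0)=\overline{\hat F(z;x_0)}$ on the relevant cut plane, the claim for $\overline{\zeta(\theta)}$ coincides with that for $\zeta(\theta)$, so it suffices to treat $\tilde\Sigma_+$. Set $h(\theta):=\hat F(\zeta(\theta);x_0)$, so that $h'(\theta)=\hat F_z(\zeta(\theta);x_0)\,\zeta'(\theta)$. Because $\hat F_z(z;x)$ depends on $x$ only through the additive term $-\log x$, one has $\hat F_z(z;x_0)=\hat F_z\bigl(z;x(\theta)\bigr)+\log\frac{x(\theta)}{x_0}$, and since $\zeta(\theta)=W_+(x(\theta))$ is by construction a saddle point of $\hat F\bigl(\,\cdot\,;x(\theta)\bigr)$, the first summand vanishes. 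Thus
\begin{equation}
\hat F_z(\zeta(\theta);x_0)=\log\frac{x(\theta)}{x_0}\in\mathbb{R},
\qquad
\frac{d}{d\theta}\,\Re\hat F(\zeta(\theta);x_0)=\log\frac{x(\theta)}{x_0}\;\Re\zeta'(\theta).
\end{equation}

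Now I track the two factors. The map $\theta\mapsto x(\theta)$ in \eqref{x_para} is a strictly decreasing bijection of $(0,\tfrac{\pi}{M+1})$ onto $(0,x_*)$ — this is the Fuss–Catalan parametrization \cite{Ne14}, and follows directly from $\tfrac{d}{d\theta}\log x(\theta)=(M+1)^2\cot((M+1)\theta)-M^2\cot(M\theta)-\cot\theta<0$ — so $\log\frac{x(\theta)}{x_0}$ is positive on $(0,\theta_0)$ and negative on $(\theta_0,\tfrac{\pi}{M+1})$. For the second factor, differentiating the saddle relation $\hat F_z(W_+(x);x)=0$ in $x$ gives $W_+'(x)=\bigl(x\,\hat F_{zz}(W_+;x)\bigr)^{-1}$, hence $\zeta'(\theta)=\dfrac{x'(\theta)}{x(\theta)\,\hat F_{zz}(\zeta(\theta);x(\theta))}$; since $\Re(1/w)$ has the sign of $\Re w$ and $x'(\theta)<0$, it remains only to prove $\Re\hat F_{zz}(\zeta(\theta);x(\theta))>0$ for every $\theta\in(0,\tfrac{\pi}{M+1})$. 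This yields $\Re\zeta'(\theta)<0$ throughout, and substituting into the displayed identity gives exactly the claimed sign pattern.

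This positivity is the heart of the matter and the step I expect to be hardest. Using $\hat F_{zz}(W_+;x)=\dfrac{(M-a)W_+-(M+1)}{W_+(W_+-1)(aW_++1)}$ with $V_+=\dfrac{\sin((M+1)\theta)}{\sin(M\theta)}e^{i\theta}$ and $W_+=\dfrac{V_+}{a+1-aV_+}$, a short simplification gives
\begin{equation}
\hat F_{zz}(\zeta(\theta);x(\theta))=\frac{\bigl(MV_+-(M+1)\bigr)\,\bigl(a+1-aV_+\bigr)^2}{(a+1)\,V_+\,(V_+-1)},
\end{equation}
and one must check that the real part of the right-hand side is strictly positive — an explicit, if lengthy, trigonometric inequality in $\theta$. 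The hypothesis $0<a<M$ is essential here: at $a=M$ the expression collapses to $\dfrac{\bigl(MV_+-(M+1)\bigr)^3}{(M+1)V_+(V_+-1)}$, whose real part vanishes identically (e.g.\ $M=a=1$ gives $\hat F_{zz}=-2i\sin^2\theta\tan\theta$), and the asserted monotonicity genuinely fails. Extra care is needed near the endpoints, where the saddle degenerates: as $\theta\to0^+$ one has $\zeta(\theta)\to\tfrac{M+1}{M-a}$ and $\hat F_{zz}\to0$ (the hard edge), so the positivity must be shown to persist up to, but not at, the boundary.

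The quantitative bound \eqref{sublem1ineq} then follows: $h'$ is continuous and vanishes only at $\theta_0$, where its rate is governed by $\hat F_{zz}(W_\pm;x_0)\neq0$ (nonzero since $\theta_0>0$, with $\tilde\Sigma_\pm$ running in the steepest-ascent direction by the monotonicity just established), so integrating $h'$ yields $\Re\hat F(\zeta(\theta);x_0)-\Re\hat F(W_\pm;x_0)\ge c\,|\zeta(\theta)-W_\pm|^2$ for $|\zeta(\theta)-W_\pm|\le\delta$, once $\delta$ is small enough that the near-origin modification of $\tilde\Sigma^\epsilon$ is irrelevant and $|\zeta'|$ stays bounded above and below; on the portion of $\tilde\Sigma^\epsilon\cap D_\delta(W_\pm)$ at distance $\ge\epsilon$ from $W_\pm$ this downgrades to the stated linear bound.
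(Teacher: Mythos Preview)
Your approach coincides with the paper's: both obtain the key identity $\frac{d}{d\theta}\Re\hat F(\zeta(\theta);x_0)=\log\tfrac{x(\theta)}{x_0}\cdot\Re\zeta'(\theta)$ from the saddle relation $\hat F_z(\zeta(\theta);x(\theta))=0$, deduce the sign pattern from the strict monotonicity of $x(\theta)$ together with $\Re\zeta'(\theta)<0$, and then read off \eqref{sublem1ineq} from the non-degeneracy $\hat F_{zz}(W_\pm;x_0)\neq0$. The one variation is in how $\Re\zeta'<0$ is reached: the paper writes down a direct real expression and asserts it is decreasing, whereas you use implicit differentiation of the saddle equation to rewrite $\zeta'=x'/(x\,\hat F_{zz})$ and reduce to $\Re\hat F_{zz}(\zeta(\theta);x(\theta))>0$ --- this is a clean reformulation of exactly the same trigonometric crux, and you are right that neither treatment carries it out in full detail.
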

\begin{lemma}\label{sublem2}
	For all conjugate pairs $w_{\pm}\in \mathbb{C}_{\pm}$ locating on $\tilde{\Sigma}$, there exists constants $\epsilon,\delta>0$ such that for all $ a\in \mathbb{R}$,
	\begin{equation}\label{sublem2ineq}
		\Re \hat{F}(\Re w_{\pm}+iy;a)\leq \Re \hat{F}(w_{\pm};a)-\epsilon|y-\Im w_{\pm}| ,\quad \text{for} |y-\Im w_{\pm}|<\delta .
	\end{equation} 
	Moreover,
	\begin{equation}
		\frac{d\Re \hat{F}(\Re w_+ +iy;a)}{d y}
		\begin{cases}
			<0, &\text{for}\quad y>\Im w_+ ,\\
			>0,  &\text{for}\quad y \in (0,\Im w_+).\\
		\end{cases}
	\end{equation}
	\begin{equation}
		\frac{d\Re \hat{F}(\Re w_- +iy;a)}{d y}
		\begin{cases}
			>0, &\text{for}\quad y<\Im w_- ,\\
			<0,  &\text{for}\quad y \in (\Im w_-,0).\\
		\end{cases}
	\end{equation}
\end{lemma}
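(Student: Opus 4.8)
The plan is to reduce the statement to a one–variable sign analysis, exploiting that the spectral parameter (written $a$ in the statement; below I write it $x$, keeping $a$ for the model parameter appearing in $\hat{F}$ through \eqref{hatF}) enters $\hat{F}(z;x)$ only through the additive term $-z\log x$. First I would observe that along the vertical line $\{\Re w_{\pm}+iy:y\in\mathbb{R}\}$ this term contributes $-(\Re w_{\pm})\log x$, which is constant in $y$ whenever $x>0$; hence both $\frac{d}{dy}\Re\hat{F}(\Re w_{\pm}+iy;x)$ and the increment $\Re\hat{F}(\Re w_{\pm}+iy;x)-\Re\hat{F}(w_{\pm};x)$ are independent of $x>0$. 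It therefore suffices to prove the lemma for the single value $x=x(\theta)$ from \eqref{x_para}, where $\theta$ is the parameter with $\zeta(\theta)=w_{+}$ (resp. $\overline{\zeta(\theta)}=w_{-}$); for that value $w_{\pm}$ are the saddle points of $\hat{F}(\cdot;x(\theta))$, so $\hat{F}_z(w_{\pm};x(\theta))=0$. By the symmetry $\hat{F}(\bar z;x)=\overline{\hat{F}(z;x)}$ (valid for $x>0$), the $w_{-}$ case reduces to the $w_{+}$ case under $y\mapsto-y$, so I will only treat $w_{+}$.

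Next I would introduce $\phi(y):=\Im\hat{F}_z(\Re w_{+}+iy;x(\theta))$. Since $\hat{F}_z(z;x)=(M+1)\log z-\log(z-1)-M\log(z+a^{-1})-\log x$, writing $c:=\Re w_{+}$ one has
\[
\phi(y)=(M+1)\arg(c+iy)-\arg(c-1+iy)-M\arg(c+a^{-1}+iy),
\]
with branches normalized so that $\phi(\Im w_{+})=0$ (legitimate because $w_{+}$ is a saddle). Since $\frac{d}{dy}\Re\hat{F}(c+iy;x(\theta))=-\phi(y)$, the ``Moreover'' part of the lemma is exactly the assertion that $\phi<0$ on $(0,\Im w_{+})$ and $\phi>0$ on $(\Im w_{+},\infty)$, while \eqref{sublem2ineq} is its quantitative sharpening near $y=\Im w_{+}$. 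I would record three elementary inputs: (a) $\phi(\Im w_{+})=0$, $\phi(0^{+})=-\arg(c-1+i0^{+})\le0$, and $\phi(y)\to0$ as $y\to+\infty$ (each $\arg\to\pi/2$), together with $0<c<\frac{M+1}{M-a}$, a property of the parametrization of $\tilde{\Sigma}$ (since $\zeta(0)=\frac{M+1}{M-a}$ and $\zeta$ meets $\mathbb{R}$ only at its endpoints); (b)
\[
\phi'(y)=\Re\hat{F}_{zz}(c+iy;x(\theta))=(M+1)\frac{c}{c^{2}+y^{2}}-\frac{c-1}{(c-1)^{2}+y^{2}}-M\frac{c+a^{-1}}{(c+a^{-1})^{2}+y^{2}},
\]
which, as a function of $u=y^{2}$, is a rational function $P(u)/Q(u)$ with $Q>0$ on $(0,\infty)$ and $P$ a quadratic with leading coefficient $1-M/a<0$; hence $\phi'$ has at most two sign changes on $(0,\infty)$ and is negative for all large $y$; (c) the non-degeneracy $\Re\hat{F}_{zz}(w_{+};x(\theta))>0$, obtained by inserting the explicit $w_{+}=\zeta(\theta)$ into $\hat{F}_{zz}(z;x)=\dfrac{(M-a)z-(M+1)}{z(z-1)(az+1)}$.

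I would then conclude by inspecting the at most two sign changes of $\phi'$. Since $\phi'<0$ near $+\infty$, its sign pattern on $(0,\infty)$ is $(-)$, $(+,-)$, or $(-,+,-)$. The pattern $(-)$ is impossible, since it would make $\phi$ strictly decreasing with $\phi(0^{+})\le0=\phi(\infty)$. In the pattern $(+,-)$ one necessarily has $\phi(0^{+})<0$ (were it $0$, $\phi$ would be strictly increasing then decreasing toward $0$ and would have no zero in $(0,\infty)$), so $\phi$ rises from a negative value to a positive maximum and falls to $\phi(\infty)=0$, being $>0$ on the whole falling branch; thus $\phi$ has exactly one zero on $(0,\infty)$, on the rising branch. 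In the pattern $(-,+,-)$, $\phi$ falls from $\phi(0^{+})\le0$ to a negative minimum, rises to a positive maximum, and falls to $0$; again $\phi<0$ before the rising branch and $\phi>0$ on and after it, so again exactly one zero on $(0,\infty)$, on the rising branch. In either case that unique zero equals $\Im w_{+}$, and because it lies on a rising branch we get $\phi<0$ on $(0,\Im w_{+})$ and $\phi>0$ on $(\Im w_{+},\infty)$, which is the claimed monotonicity of $\Re\hat{F}$; moreover $\phi'(\Im w_{+})=\Re\hat{F}_{zz}(w_{+};x(\theta))>0$ by (c), so $\Re\hat{F}(c+iy;x(\theta))$ has a nondegenerate strict maximum at $y=\Im w_{+}$ along the vertical line, and a Taylor expansion there together with the strict monotonicity just proved yields \eqref{sublem2ineq} on $|y-\Im w_{+}|<\delta$ — uniformly in $x>0$ by the first step.

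I expect the main obstacle to be the two explicit verifications packaged in (a) and (c): that $0<\Re\zeta(\theta)<\frac{M+1}{M-a}$ for every $\theta\in(0,\frac{\pi}{M+1})$, and that $\Re\hat{F}_{zz}(\zeta(\theta);x(\theta))>0$ there. Both are elementary once $\zeta(\theta)$ is put in the trigonometric form used in \eqref{den_para}, but the computations are where the real content lies; an alternative route to the local part of \eqref{sublem2ineq} is to combine Lemma \ref{sublem1} with the harmonicity of $\Re\hat{F}$, reducing that part to the single inequality $\Re\hat{F}_{zz}(w_{+};x(\theta))>0$ and leaving only the global-in-$y$ monotonicity of $\phi$ to the direct computation above. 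A minor point: the reduction in the first step is clean only for positive spectral values, so although $\widetilde{K}_n$ lives on positive arguments (which is why the ``$a\in\mathbb{R}$'' hypothesis is harmless in context), one really proves, and only needs, the statement for $x>0$.
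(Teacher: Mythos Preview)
Your approach coincides with the paper's: both study the sign of $\frac{d}{dy}\Re\hat F(c+iy)=-\Im\hat F_z(c+iy)$ by counting sign changes of its derivative $\Re\hat F_{zz}(c+iy)$, exploiting that $w_+$ is a saddle so that this first derivative vanishes at $y=\Im w_+$. Your execution is in fact more complete than the paper's. The paper asserts that $\Re\hat F_{zz}(c+iy)=0$ has a \emph{unique} root $y^*\in[0,\infty)$ and that $\frac{d}{dy}\Re\hat F$ is increasing on $(0,y^*)$ and decreasing on $(y^*,\infty)$; but when $c=\Re w_+>1$ the quadratic $P(u)$ can have two positive roots (for instance $M=2$, $a=1$, $c=2$ gives $P(u)=-u^2+17u-6$), so the sign pattern of $\phi'$ is genuinely $(-,+,-)$ there, and your case analysis is what actually closes the argument. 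Your explicit reduction to a single spectral value $x=x(\theta)$ via the observation that $-z\log x$ is $y$-independent on vertical lines is also cleaner than the paper's implicit treatment.

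One small correction, which is really a flaw in the lemma statement rather than in your reasoning: a nondegenerate strict maximum yields only quadratic decay $\Re\hat F(c+iy)-\Re\hat F(w_+)\le -\epsilon'(y-\Im w_+)^2$ near $\Im w_+$, not the linear bound \eqref{sublem2ineq} as written. Your Taylor-expansion step produces the quadratic estimate, not the linear one. This does not matter for the application, since the subsequent saddle-point bounds \eqref{slocal}--\eqref{tlocal} use precisely the quadratic version.
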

By above lemmas, the value of $\hat{F}(s;x_0)$ attians its maximum over $\mathcal{C}$ around two  points $W_{\pm}$, while $\hat{F}(t;x_0)$ attains its minimum over  $\Sigma_{curved}$ around  $nW_{\pm}$. Then there is a constant $\epsilon_1$ such that for all $s \in \mathcal{C}_{local}^+$
\begin{equation}\label{slocal}
	|e^{\frac{\rho(\theta)x_0}{2}\hat{F}_{zz}(W_+;x_0)u^2}|\leq e^{-\epsilon_1|u|^2},
\end{equation}
and there is a constant $\epsilon_2$ such that for all $t \in \Sigma_{local}^+$,
\begin{equation}\label{tlocal}
	|e^{\frac{\rho(\theta)x_0}{2}\hat{F}_{zz}(W_+;x_0)v^2}|\geq e^{\epsilon_2|v|^2}.
\end{equation}
Combining \eqref{es.p.v.local}, \eqref{slocal} and \eqref{tlocal}, applying saddle point method, we obtain 
\begin{align}\label{pvlocal+}
	\text{P.V.} \int_{\mathcal{C}_{local}^{+}}ds \oint_{\Sigma_{local}^{+}}dt\frac{e^{F(s;y)}}{e^{F(t;x)}}\frac{1}{s-t}
	=O\Big(\big(n\rho(\theta)\big)^{\frac{3}{5}}x_0\Big).
\end{align}
In the similar manner, after making change of the variables 
\begin{equation}
s=nW_{-}+\sqrt{n\rho(\theta)}x_0u, \quad	t=nW_{-}+\sqrt{n\rho(\theta)}x_0v,
\end{equation}
we have 
\begin{equation}\label{pvlocal-}
\text{P.V.} \int_{\mathcal{C}_{local}^{-}}ds \oint_{\Sigma_{local}^{-}}dt\frac{e^{F(s;y)}}{e^{F(t;x)}}\frac{1}{s-t}=O\Big(\big(n\rho(\theta)\big)^{\frac{3}{5}}x_0\Big).
\end{equation}
\begin{lemma}\label{stglobales}
	For $s \in \mathcal{C}\setminus (\mathcal{C}_{local}^{+}\cup\mathcal{C}_{local}^{-})$ and $t \in \Sigma \setminus (\Sigma_{local}^{+}\cup \Sigma_{local}^{-})$, thers exists $\epsilon_3$  such that for large enough $n$
\begin{equation}
	|e^{F(s;y)}|=|e^{F(s;x_0)}\big(1+\frac{\eta}{n\rho(\theta)x_0}\big)^{-s}|<|e^{F(nW_{\pm};x_0)}|e^{-\epsilon_3\big(n\rho(\theta)\big)^{\frac{1}{5}}},
\end{equation}
\begin{equation}
	|e^{-F(t;x)}|=|e^{-F(t;x_0)}\big(1+\frac{\xi}{n\rho(\theta)x_0}\big)^{t}|<|e^{-F(nW_{\pm};x_0)}|e^{-\epsilon_3\big(n\rho(\theta)\big)^{\frac{1}{5}}}.
\end{equation}
\end{lemma}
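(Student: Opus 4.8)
The plan is to push everything onto the leading exponent $n\Re\hat F(\cdot/n;x_0)$ via \eqref{FBS} and then read off the decay from the saddle geometry of $\hat F$ supplied by Lemmas~\ref{sublem1} and \ref{sublem2}. First I would dispose of the spectral perturbation: on $\mathcal C$ one has $|s|=O(n)$ while $\log\bigl|1+\eta/(n\rho(\theta)x_0)\bigr|=O(1/n)$, so $\bigl(1+\eta/(n\rho(\theta)x_0)\bigr)^{-s}$ stays between two absolute constants, and likewise $\bigl(1+\xi/(n\rho(\theta)x_0)\bigr)^{t}$ on the bounded curve $\Sigma$; hence it suffices to prove the two inequalities with $F(s;x_0)$ and $F(t;x_0)$ in place of $F(s;y)$ and $F(t;x)$. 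By \eqref{FBS}, $\Re F(z;x_0)=n\Re\hat F(z/n;x_0)+\Re c_{M,n}+O(\log n+\Delta_{M,n})$, and since $\Re c_{M,n}$ is a finite sum of logarithms of quantities of order $n$, its variation over the rescaled contours is $o(n)$; because the comparison is against $\Re F(nW_\pm;x_0)$, where the same $c_{M,n}$-type term appears (precisely the constant $\tilde c_M$ extracted in \eqref{BULKeFs}--\eqref{BULKeFt}), the whole claim reduces to establishing $n\bigl(\Re\hat F(W_\pm;x_0)-\Re\hat F(s/n;x_0)\bigr)>\epsilon_3(n\rho(\theta))^{1/5}+O(\log n)$ for $s\in\mathcal C\setminus(\mathcal C_{local}^{+}\cup\mathcal C_{local}^{-})$, and the reversed inequality for $\hat F$ along $\Sigma$.

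For the $s$-contour I would split $\mathcal C\setminus(\mathcal C_{local}^{+}\cup\mathcal C_{local}^{-})$ into three ranges. (a) On the annuli $r_0\le|s-nW_\pm|\le\delta n$ around the two saddles, a Taylor expansion of $\hat F$ at $W_\pm$ (where $\hat F_z(W_\pm;x_0)=0$), combined with the fact that $\mathcal C$ is the steepest-descent direction so that $\Re\hat F$ has a non-degenerate local maximum at $W_\pm$ along $\mathcal C$, yields $\Re\hat F(W_\pm;x_0)-\Re\hat F(s/n;x_0)\gtrsim|s/n-W_\pm|^2\ge(r_0/n)^2$; since $r_0=(n\rho(\theta))^{3/5}x_0$ and $\rho(\theta)\asymp1$, this is $\asymp n^{-4/5}\asymp(n\rho(\theta))^{1/5}/n$, which after multiplying by $n$ is exactly the gap required, and the monotonicity in Lemma~\ref{sublem2} only improves the bound farther out. (b) For $s$ at macroscopic distance $\ge\delta n$ from both saddles with $|\Im s|$ bounded, the monotonicity of Lemma~\ref{sublem2} forces $W_\pm$ to be the only local maxima of $\Re\hat F$ on $\mathcal C$, so $\Re\hat F(s/n;x_0)$ falls below $\Re\hat F(W_\pm;x_0)$ by a fixed constant, giving a gain of order $n$. (c) On the tails, the expansion $\hat F(z;x_0)=(1-M/a)\log z-z\log x_0+O(1)$ as $z\to\infty$ together with $M>a$ forces $\Re\hat F(s/n;x_0)\to-\infty$ along $\mathcal C$, so the inequality is trivial. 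The $t$-contour is handled symmetrically: on $\Sigma_{curved}=n\tilde\Sigma^{r}$ away from the local discs, Lemma~\ref{sublem1} (unique minimum of $\theta\mapsto\Re\hat F(\zeta(\theta);x_0)$ at $\theta=\theta_0$, with non-degenerate minimum there) gives $\Re\hat F(t/n;x_0)-\Re\hat F(W_\pm;x_0)\gtrsim(r_0/n)^2$ on the boundary of the disc and larger beyond; on the small circular arc of $\tilde\Sigma^{r}$ near the origin and on the vertical bars $\Sigma_3,\Sigma_4$ (which lie a horizontal distance $\asymp\epsilon$ from $\mathcal C$ but are macroscopically long) one gets a gap of order $n$ directly from continuity and compactness, provided $\epsilon$ and $\delta$ are fixed small enough that $\Re\hat F$ stays above $\Re\hat F(W_\pm;x_0)$ there.

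The main obstacle is the global control of $\Re\hat F$ along the full vertical line $\mathcal C$ for $s$: one must certify that $W_+,W_-$ are the only local maxima of $\Re\hat F$ on $\mathcal C$ and that $\Re\hat F$ decreases monotonically as $s$ leaves their neighbourhoods toward $\pm i\infty$, which comes down to tracking the sign of $\Im\hat F_z(\Re W_\pm+iy;x_0)=(M+1)\arg z-\arg(z-1)-M\arg(z+1/a)$ as $|y|$ grows, using $0<\Re W_\pm<\tfrac{M+1}{M-a}$ — this is exactly what is borrowed from Lemma~\ref{sublem2}. The only other delicate point is the calibration of $\epsilon$ and $\delta$ so that the deformed curve $\tilde\Sigma^{r}$ and the closing bars $\Sigma_3,\Sigma_4$ remain inside $\{\Re\hat F>\Re\hat F(W_\pm;x_0)\}$; once these geometric facts are recorded, the bookkeeping of exponents (with $r_0=(n\rho(\theta))^{3/5}x_0$ and $\rho(\theta)\asymp1$ dominating the $O(\log n+\Delta_{M,n})$ corrections) is routine.
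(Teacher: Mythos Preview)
Your plan is exactly the route the paper indicates: it does not give a detailed argument but simply records that the lemma ``is derived from Lemmas~\ref{sublem1} and~\ref{sublem2}'' and refers to \cite{LWZ16}, i.e.\ reduce $F$ to $n\hat F(\cdot/n;x_0)$ via \eqref{FBS} and then use the saddle monotonicity of those two lemmas near and away from $W_\pm$, precisely as you outline in (a)--(c) for $\mathcal C$ and via Lemma~\ref{sublem1} for $n\tilde\Sigma^{r}$. One small correction to your first reduction: on the infinite vertical line $\mathcal C$ you cannot say $|s|=O(n)$; the reason $\bigl(1+\eta/(n\rho(\theta)x_0)\bigr)^{-s}$ stays between two fixed positive constants is that $\Re s=n\Re W_\pm$ is \emph{constant} on $\mathcal C$, so $\bigl|(1+c)^{-s}\bigr|=(1+c)^{-n\Re W_\pm}=e^{O(1)}$ uniformly in $\Im s$.
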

Lemma \ref{stglobales} is derived from Lemmas \ref{sublem1} and \ref{sublem2}, and we will not delve into the specific details of the proof. It is advisable to consult Lemma 2.1 in \cite{LWZ16}.
Applying Lemma \ref{stglobales} to $I_2$ , we have 
\begin{equation}\label{pvglo}
	\text{P.V.}\int_{ \mathcal{C}\setminus (\mathcal{C}_{local}^{+}\cup\mathcal{C}_{local}^{-})}ds\oint_{\Sigma \setminus (\Sigma_{local}^{+}\cup \Sigma_{local}^{-})}dt\frac{e^{F(s;y)}}{e^{F(t;x)}}\frac{1}{s-t}=O\Big(e^{-\epsilon \big(n\rho(\theta)\big)^{\frac{1}{5}}}\Big).
\end{equation}
Combining the estimates \eqref{pvlocal+}, \eqref{pvlocal-} and \eqref{pvglo}, we obtain
\begin{equation}\label{I_2GLO}
\begin{split}	|\lim_{\epsilon \to 0}I_1|&=\frac{e^{\pi \frac{\Re W_+}{\Im W_+}(\xi-\eta)}}{|y|}\Big(O\Big(\big(n\rho(\theta)\big)^{\frac{3}{5}}x_0\Big)+O\Big(e^{-\epsilon \big(n\rho(\theta)\big)^{\frac{1}{5}}}\Big)\Big)\\
	&=O\Big(\big(n\rho(\theta)\big)^{\frac{3}{5}}e^{\pi \frac{\Re W_+}{\Im W_+}(\xi-\eta)}\Big).\end{split}
\end{equation}
With the aid of  \eqref{I2sine} and \eqref{I_2GLO}, we have $\frac{|\lim\limits_{\epsilon \to 0}I_1|}{|\lim\limits_{\epsilon \to 0}I_2|}=O\Big(\big(n\rho(\theta)\big)^{-\frac{2}{5}}\Big) $.

Summing up $I_1$ and $I_2$ as $\epsilon \to 0$, and letting $n \to \infty$ and $\Delta_{M,n} \to 0$, we complete the proof of Theorem \ref{TBU}  $\mathtt{(ii)}$.%and get the sine kernel.
\end{proof}
\begin{proof}[Proof of Lemma \ref{sublem1}]
	Recall that $x_0 = x(\theta_0)$ is given by \eqref{x_para}. For all $\theta \in (0, \pi/(M+1))$, the function $\zeta(\theta)$ satisfies the equation
	\begin{equation}
		\zeta(\theta)^{M+1} - (\zeta(\theta)-1)(\zeta(\theta)+1/a)^M = 0. \label{eq:zeta_eq}
	\end{equation}
	Consequently, the derivative of $\hat{F}(\zeta(\theta); x_0)$ with respect to $\theta$ is
	\begin{align}
		\frac{d}{d\theta}\hat{F}(\zeta(\theta);x_0) = \frac{d\hat{F}}{d\zeta}\frac{d\zeta}{d\theta}
		&= \log\left(\frac{\zeta(\theta)^{M+1}}{(\zeta(\theta)-1)(\zeta(\theta)+1/a)^M}\right) \frac{d\zeta}{d\theta} \notag \\
		&= \log\left(\frac{x(\theta)}{x(\theta_0)}\right) \frac{d\zeta}{d\theta}. \label{eq:F_deriv}
	\end{align}
	
	The function $\theta \mapsto \sin \theta / \sin(c\theta)$ is strictly decreasing on $(0, \pi)$ for $0 < c < 1$, which implies $dx(\theta)/d\theta < 0$. Since $x(\theta) > 0$ and
	\begin{equation}
		\Re\left( \frac{d\zeta(\theta)}{d\theta} \right) = \frac{d}{d\theta}\left( \frac{\sin ((M+1)\theta)}{(a+1)\sin (M\theta) - a\sin((M+1)\theta} \right) < 0 \label{eq:dz_real}
	\end{equation}
	holds for all $\theta \in (0, \pi/(M+1))$, we deduce that
	\begin{equation}
		\frac{d}{d\theta} \Re \hat{F}(\zeta(\theta);x_0)
		\begin{cases}
			< 0, & \text{for } \theta \in (0, \theta_0), \\
			> 0, & \text{for } \theta \in (\theta_0, \pi/(M+1)).
		\end{cases} \label{eq:dReF_sign}
	\end{equation}
	Moreover,
	\begin{equation}
		\left. \frac{d^2}{d\theta^2} \Re \hat{F}(\zeta(\theta);x_0) \right|_{\theta=\theta_0} > 0. \label{eq:d2ReF_sign}
	\end{equation}
	Properties \eqref{eq:dReF_sign} and \eqref{eq:d2ReF_sign} establish the inequality \eqref{sublem1ineq} for $z \in \mathbb{C}_{+}$.
	
	Finally, since $\Re \hat{F}(z;x)$ is symmetric with respect to the real axis, the result holds for $z \in \mathbb{C}_{-}$ as well, completing the proof.
\end{proof}
\begin{proof}[Proof of Lemma \ref{sublem2}]
	
For $\theta \in (0, \pi/(M+1))$, we observe that
	\begin{equation}
		\Re W_{\pm}  \in \left(0, \frac{M+1}{M-a}\right).
	\end{equation}
	For $y > 0$, the second derivative satisfies
	\begin{equation}
		\frac{d^2}{dy^2}\Re \hat{F}(x+iy;a) 
		= -\left.\Re\frac{d^2\hat{F}(z;a)}{dz^2}\right|_{z=x+iy},
	\end{equation}
	where
	\begin{equation}
		\frac{d^2\hat{F}(z;a)}{dz^2} = \frac{M+1}{z} - \frac{1}{z-1} - \frac{M}{z+1/a}.
	\end{equation}
	Since 
	\begin{equation}
		\lim_{y \to +\infty} \frac{d^2}{dy^2}\Re \hat{F}(x+iy;a) = 0,
	\end{equation}
	it follows that  the equation $\frac{d^2}{dy^2}\Re \hat{F}(x+iy;a) = 0$ has a unique real root $y^* \in [0, \infty)$ for each $x \in (0, (M+1)/(M-a))$. Consequently, $\frac{d}{dy}\Re \hat{F}(\Re W_{\pm} + iy; a)$ is strictly increasing on $(0, y^*)$ and strictly decreasing on $(y^*, \infty)$.  
	
	Note that $\frac{d}{dy}\Re \hat{F}(\Re W_{\pm } + iy; a)$ is a continuous odd function in $y$ that vanishes at $y = 0$. So we known that  $\frac{d}{dy}\Re \hat{F}(\Re W_{\pm} + iy; a) > 0$ for $y \in (0, y^*)$. Furthermore, 
	\begin{equation}
		\left.\frac{d}{dy}\Re \hat{F}(\Re W_{+} + iy; a)\right|_{y=\Im W_{+}} 
		= -\Im  \left.\frac{d\hat{F}}{dz}(z;a)\right|_{z=W_{+}} 
		= 0,
	\end{equation}
	which implies $\Im W_{+} \in (y^*, \infty)$. On $[0, \infty)$, we conclude that the function $\Re \hat{F}(\Re W_{\pm} + iy; a)$ is concave and attains its maximum at $y = \Im W_+$, which give us inequality \eqref{sublem2ineq}. 
	
	By symmetry in $y$, the analogous result holds for $y < 0$. This completes the proof.
\end{proof}

\subsection{Proof of Theorem \ref{TSEU}}
\begin{proof}[Sketched proof of Theorem \ref{TSEU}]
We provide a sketch of the proof for fixed $M$, highlighting refinements to the argument in Theorem \ref{subthm}. Adopting the scaling 
\begin{equation} \label{eq:scaling}
	x = x_* + \frac{c_2\xi}{n^{2/3}}, \quad 
	y = x_* + \frac{c_2\eta}{n^{2/3}}, \quad \xi,\eta \in \mathbb{R},
\end{equation}
where $x_*$ is defined in \eqref{defxright}, the correlation kernel transforms to
\begin{equation} \label{eq:kernel_transformed}
	\widetilde{K}_n(x,y) = \frac{1}{y(2\pi i)^2} \int_{\mathcal{C}} ds \oint_{\Sigma} dt  \frac{e^{F(s;x_*)}}{e^{F(t;x_*)}} \frac{(1 + n^{-2/3}c_1^{-1}\eta)^{-s}}{(1 + n^{-2/3}c_1^{-1}\xi)^{-t}} \frac{1}{s-t}.
\end{equation}
Here $F(s;x)$ is given by \eqref{FBS}, and contours $\mathcal{C}$, $\Sigma$ are deformed such that $\Sigma$ lies to the left of $\mathcal{C}$. 

Define the saddle point $z_0 = \frac{M+1}{M-a}$. Through the change of variables
\begin{equation} \label{eq:contour_scaling}
	s = n z_0 + c_1 u n^{2/3}, \quad 
	t = n z_0 + c_1 v n^{2/3}, \quad 
	c_1 := \frac{x_*}{c_2},
\end{equation}
taking a Taylor expansion at $z_0$, we have 
\begin{equation} \label{eq:taylor_expansion}
	\begin{split}
		\hat{F}(z_0 + n^{-1/3}c_1 u; x_*) 
		&= \hat{F}(z_0;x_*) + \hat{F}_z(z_0;x_*) c_1 u n^{-1/3} \\
		&\quad + \frac{1}{2} \hat{F}_{zz}(z_0;x_*) c_1^2 u^2 n^{-2/3} \\
		&\quad + \frac{1}{6} \hat{F}_{zzz}(z_0;x_*) c_1^3 u^3 n^{-1} + O(n^{-6/5}).
	\end{split}
\end{equation}
Direct computations yield the derivatives
\begin{equation} \label{eq:derivatives}
	\hat{F}_z(z_0;x_*) = 0, \quad 
	\hat{F}_{zz}(z_0;x_*) = 0, \quad 
	\hat{F}_{zzz}(z_0;x_*) = \frac{(M-a)^4}{M(M+1)(a+1)^2}.
\end{equation}
Substituting \eqref{eq:derivatives} into \eqref{eq:taylor_expansion} gives the uniform expansion for $u \in D_{n^{1/30}}(0)$,
\begin{equation} \label{eq:critical_expansion}
	\hat{F}(z_0 + n^{-1/3}c_1 u; x_*) = \hat{F}(z_0;x_*) + \frac{u^3}{3n} + O(n^{-6/5}).
\end{equation}

Deforming $\mathcal{C}$ and $\Sigma$ to pass through neighborhoods of $nz_0$ in  proper directions,  the contribution from the ``local" part is

%On the other hand, \begin{equation}\label{explim}
%	\Big(1+n^{-2/3}c_1\eta\Big)^{-(nz_0+c_1n^{2/3}u)}=e^{-u\eta-n^{1/3}\frac{M+1}{c_1(M-a)}\eta}(1+O(n^{-1/3})).
%\end{equation}
%where $\eta $ lies in a compact subset of $\mathbb{R}$.
%This, together with \eqref{Fz_0es} and \eqref{explim}, implies
%\begin{equation}\label{fsestimate}
%\begin{split}	&e^{F(s;x_*)}\Big(1+n^{-2/3}c_1\eta\Big)^{-s}\\
%&=e^{n\hat{F}(z_0;x_*)+c_M+(1-\frac{M}{a})n\log n}e^{\frac{u^3}{3}-u\eta}e^{-n^{1/3}\frac{M+1}{c_1(M-a)}\eta}(1+O(n^{-1/5})),
%\end{split}
%\end{equation}
%uniformly for $s \in \mathcal{C}_{local}$ and $\eta$ in a compact subset of $\mathbb{R}$.
%For $t \in \Sigma_{local}$ and $\xi$ belongs to a compact subset of $\mathbb{R}$, we have 
%\begin{equation}\label{ftestimate}
%\begin{split}	&e^{F(t;x_*)}\Big(1+n^{-2/3}c_1\xi\Big)^{-t}\\
%	&=e^{n\hat{F}(z_0;x_*)+c_M+(1-\frac{M}{a})n\log n}e^{\frac{v^3}{3}-v\xi}e^{-n^{1/3}\frac{M+1}{c_1(M-a)}\xi}(1+O(n^{-1/5})).
%\end{split}
%\end{equation}

\begin{equation}\label{airyes}
	\begin{split}
	&\frac{1}{y(2\pi i)^2}\int_{\mathcal{C}_{local}}ds\oint_{\Sigma_{local}}dt \frac{e^{F(s;x_*)}\Big(1+n^{-2/3}c_1\eta\Big)^{-s}}{e^{F(t;x_*)}\Big(1+n^{-2/3}c_1\xi\Big)^{-t}}\frac{1}{s-t}\\
	&=\frac{e^{n^{1/3}\frac{M+1}{M-a}c_1^{-1}(\xi-\eta)}}{n^{-2/3}c_1^{-1}x_*}\Big(\frac{1}{(2\pi i)^2}\int_{\mathcal{C}_<} du \int_{\Sigma_>} dv \frac{e^{\frac{u^3}{3}-u\eta}}{e^{\frac{v^3}{3}-v\xi}}\frac{1}{u-v}+O(n^{-1/5})\Big)\\
	&=\frac{e^{n^{1/3}\frac{M+1}{M-a}c_1^{-1}(\xi-\eta)}}{n^{-2/3}c_2}\Big(	K_{\mathtt{Ai}}(\xi,\eta)++O(n^{-1/5})\Big),
	\end{split}
\end{equation}
where $\mathcal{C}_<$ and $\Sigma_>$ are the images of $\mathcal{C}_{\text{local}}$ and $\Sigma_{\text{local}}$, similar to what is shown in the proof of Theorem \ref{subthm}. 

Combining  the remaining part of the integral is negligible in the asymptotic analysis, we get the desired result.\end{proof}

\begin{acknow}
This work was supported by the National Natural Science Foundation of China \#12371157  and \#12090012.
\end{acknow}

	\end{document}